 \definecolor{darkblue}{RGB}{0,0,160}
\def\eps{\varepsilon}
\def\d{{\rm d}}
\def\R {\mathbb{R}}
\def\M {{\mathrm M}}
\def\1 {{\mbox{\boldmath 1}}}
\def \l {\langle}
\def \r {\rangle}
\def \and{\quad\text{and}\quad}
\newcommand{\cic}[1]{\mbox{\boldmath$#1$}}
\def \no#1#2#3 {{\bf #1} (#3), #2.}
\def \eds#1#2#3 {#1, #2, #3.}
\newcounter{thms}
\numberwithin{other}{section}
\newtheorem{theorem}[thms]{Theorem}
\newtheorem*{theorem*}{Theorem}
\newtheorem*{proposition*}{Proposition}
\numberwithin{corollary}{thms}
\newtheorem{lemma}[subsection]{Lemma}
\theoremstyle{definition}
\newtheorem{remark}[subsection]{Remark}
\numberwithin{equation}{section}
\title[Sparse estimate for vector-valued maximal functions]{A sparse estimate for multisublinear forms\\ involving vector-valued maximal functions}
 \author[A.\ Culiuc]{Amalia Culiuc}
 \address{\noindent School of Mathematics, Georgia Institute of Technology, \newline \indent Atlanta, GA 30332, USA}
\email{amalia@math.gatech.edu}
 \author[F. Di Plinio]{Francesco Di Plinio} \address{\noindent Department of Mathematics, University of Virginia,  \newline \indent Kerchof Hall,  Box 400137, Charlottesville, VA 22904-4137, USA   }
 \email{francesco.diplinio@virginia.edu}
 \author[Y.\ Ou]{Yumeng Ou}
 \address{\noindent Department of Mathematics, Massachusetts Institute of Technology, \newline \indent  77 Massachusetts Avenue, Cambridge, MA 02139, USA  }
\email{yumengou@mit.edu} 
  \subjclass[2010]{Primary: 42B25. Secondary: 42B20}
 \keywords{Sparse domination,   vector-valued estimates, weighted norm inequalities, bilinear Hilbert transforms}
 \thanks{F. Di Plinio was partially
supported by the National Science Foundation under the grants NSF-DMS-1500449 and  NSF-DMS-1650810,  by the Severo Ochoa Program SEV-2013-0323 and by Basque Government BERC Program 2014-2017.}
\begin{document}
\begin{abstract} We prove a sparse bound for the {{$m$-sublinear}} form associated to vector-valued maximal functions of Fefferman-Stein type. As a consequence, we show that the sparse bounds of multisublinear operators are  preserved via  $\ell^r$-valued extension. This observation is in turn used to deduce  vector-valued, multilinear weighted norm inequalities for multisublinear operators obeying sparse bounds, which are out of reach for the extrapolation theory developed by Cruz-Uribe and Martell in \cite{CUM}. As an example, vector-valued multilinear weighted inequalities for bilinear Hilbert transforms are deduced from the scalar sparse domination theorem of \cite{CuDPOu}.

%In particular we show that the  vector-valued sparse domination for the bilinear Hilbert transform and for the variational Carleson operator, recently obtained by Benea and Muscalu in \cite{BM2} via the so-called helicoidal method, is a consequence of the scalar valued result  of \cite{CuDPOu} by the three of us, and \cite{DPDU} by Do, Uraltsev and the second author respectively.
\end{abstract}
\maketitle
%\begin{center}{\scriptsize{ \color{blue}  \today.  \textit{This is a preliminary version not intended for publication.}} } \end{center}
\section{Main results} 
{{Let $m=1,2,\ldots$ and $\vec{p}=(p_1,\ldots,p_m)\in (0,\infty]^m$ be a generic $m$-tuple of exponents. This note is centered around the vector-valued $m$-sublinear maximal function
\begin{equation}
\label{FS}
\M_{\vec p,r} (\cic{f}^1,\ldots, \cic{f}^m)   := \left\|\sup_{Q} \prod_{j=1}^m \l f^j_k \r_{p_j,Q} \cic{1}_{Q}  \right\|_{\ell^ r(\mathbb C^N)}, \qquad 1\leq r\leq \infty.
\end{equation}
Here each $\cic f^j=(f^j_{1},\ldots,f^j_N)$ is a $\mathbb C^N$-valued locally $p_j$-integrable function on $\R^d$, the supremum is being taken over all cubes $  Q\subset \R^d$. and we have adopted for $\cic{a}=(a_1,\ldots,a_N) \in \mathbb C^N$ the usual notation 
\[
\|\cic{a}\|_{\ell^r} := 
\left( \sum_{k=1}^N |a_k|^r\right)^{\frac1r}, \quad0<r <\infty,\qquad \|\cic{a}\|_{\ell^\infty}:=\sup_{k=1,\ldots,N}|a_k|,
\]as well as
\[
\l f \r_{p,Q}:=  \frac{\| f\cic{1}_Q\|_p}{|Q|^{\frac1p}}.
\]The parameter $N$ is merely formal and all $\ell^r$-valued estimates below are meant to be independent of $N$ without explicit mention. Note that when $m=1$, \eqref{FS} reduces to the well studied Fefferman-Stein maximal function \cite[Ch.\ II.1]{Stein}. In fact,
it follows by H\"older's inequality that 
\begin{equation}
\label{FS3}
\M_{\vec p,r} (\cic{f}^1,\ldots, \cic{f}^m) \leq \prod_{j=1}^m \M_{p_j,r_j} (\cic{f}^j).
\end{equation} 
Therefore, the  full range of strong Lebesgue space estimates
\begin{equation}
\label{FS2}
\M_{\vec p,r} :\prod_{j=1}^{m} L^{q_j}(\R^d;\ell^{r_j})  \to L^{q}(\R^d), \quad   q= \frac{1}{ \sum_{j=1}^m\frac{1}{q_j}}, \quad r= \frac{1}{  \sum_{j=1}^m\frac{1}{r_j}}, \quad  1\leq p_j < \min\{ r_j,q_j\}
\end{equation}
and the weak-type endpoint
\begin{equation}\label{FS4}
\M_{\vec p,r} :\prod_{j=1}^{m} L^{p_j}(\R^d;\ell^{r_j}) \to L^{p,\infty}(\R^d),\quad p=\frac{1}{ \sum_{j=1}^m\frac{1}{p_j}}, \quad r= \frac{1}{  \sum_{j=1}^m\frac{1}{r_j}}, \quad  1\leq p_j < r_j
\end{equation}
are subsumed by the $m=1$ case  discussed in \cite[Ch.\ II.1]{Stein}, via H\"older's inequality in strong and weak-type spaces respectively.
Moreover, (\ref{FS3}) can be strengthened to the following form: given any partition $\mathcal{I}:=\{I_1,\ldots,I_s\}$ of $\{1,\ldots,m\}$, there holds
\[
\M_{\vec p,r}(\cic{f}^1,\ldots,\cic{f}^m)\leq \prod_{i=1}^s\M_{\vec{p}_i,r_i}(\cic{f}^{(i)})\leq \prod_{j=1}^m \M_{p_j,r_j} (\cic{f}^j),
\]where $\cic{f}^{(i)}:=\{\cic{f}^j\}_{j\in I_i}$, $\vec{p}_i:=(p_j)_{j\in I_i}$, and $1/r_i:=\sum_{j\in I_i}1/r_j$.

The first main result of this note, Theorem \ref{main:th} below, is a nearly sharp  sparse estimate involving vector-valued $m$-sublinear maximal functions of the form
\begin{equation}\label{form:eq}
\int_{\R^d}   \prod_{i=1}^s\M_{\vec p_i,r_i}( \cic{f}^{(i)})(x) \, \d x,
\end{equation}
which strengthens the Lebesgue space estimates \eqref{FS2}, \eqref{FS4}.
 As an application of Theorem \ref{main:th}, we obtain a structural result on sparse bounds,  Theorem \ref{sparse:th} below, which seems to have gone unnoticed in previous literature: sparse bounds in the scalar setting self-improve to the $\ell^r$-valued setting. In other words, if a given  sequence of operators are known to obey a uniform sparse bound,  the vector-valued operator associated to the sequence satisfies the same $\ell^r$-valued sparse bound, without the need for additional structure of the operators. 

We proceed to define the notion of sparse bound we have referred hitherto. A countable collection $\mathcal Q$ of cubes of $\R^d$ is \emph{sparse} if there exist a pairwise disjoint collection of sets $\{E_Q:\,Q\in\mathcal{Q}\}$ such that for each $Q \in \mathcal Q$ there holds \[
E_Q\subset Q, \qquad |E_Q|>\frac{1}{2}|Q|.\]   Let $n\geq 1$ and $T$ be a $n$-sublinear operator mapping ($n$ copies of)  $L^\infty_0(\R^d;\mathbb C)$ into locally integrable functions. If $\vec p\in (0,\infty)^{n+1}$,  the sparse $\vec p$ norm of $T$, denoted by $\|T\|_{\vec p}$, is the least constant $C>0$ such that for all $(n+1)$-tuples  $ \vec{ {g}}=(g^1,\ldots,g^{n+1})\in L^\infty_0(\R^d;\mathbb C)^{n+1} $ we may find a sparse collection $\mathcal Q=\mathcal Q(\vec{ {g}})$ such that
\[
\begin{split}
\left|\l T(g^1,\ldots,g^{n}),g^{n+1}\r\right| \leq C  \sum_{Q\in \mathcal Q} |Q| \prod_{j=1}^{n+1} \l g^j \r_{p_j,Q}.\end{split}
\]
Beginning with the breakthrough work of Lerner \cite{Ler2013}, sparse bounds have recently come to prominence in the study of singular integral operators, both at the boundary of \cite{Bar2017,CoCuDPOu,Lac2015,Ler2016} and well beyond Calder\'on-Zygmund theory \cite{BFP,CoCuDPOu,CuDPOu,Lac2017}; the list of references provided herein is necessarily very far from being exhaustive. As we will see in Section \ref{S3}, their interest lies in that they imply rather easily quantitative weighted norm inequalities for the corresponding operators.  

The concept of sparse bound extends naturally    to vector-valued operators. If \[\cic{T}=\{T_1,\ldots,T_N \}\] is a sequence of  $n$-sublinear operators as above, we may let $\cic{T}$ act on $ L^\infty_0(\R^d;\mathbb C^N)^{n} $ as
\[
\l \cic{T}(\cic{f}^1,\ldots,\cic{f}^{n}), \cic{f}^{n+1}\r := \sum_{k=1}^N \l  {T}_k( {f}^1_k,\ldots, f^{n}_k),  {f}^{n+1}_k\r.
\]
Let $(r_1,\ldots,r_{n+1})$ be a Banach H\"older $(n+1)$-tuple, that is 
\begin{equation}
\label{Holder}
1\leq r_j\leq \infty,\quad j=1,\ldots,n+1,\qquad
r:=\frac{r_{n+1}}{r_{n+1}-1}=\frac{1}{\sum_{j=1}^{n} \frac{1}{r_{j}}}
\end{equation}
and define the sparse $(\vec p,\vec r)$-norm of $\cic{T}$ as the least constant $C>0$ such that 
\begin{equation}\label{vvdef:eq}
\begin{split}
 |\l  \cic{T}(\cic{f}^1,\ldots,\cic{f}^{n}), \cic{f}^{n+1}\r| \leq C   \sum_{Q\in \mathcal Q} |Q| \prod_{j=1}^{n+1} \left\langle \| \cic{f}^{j}\|_{\ell^{r_j}}
\right\rangle _{p_j,Q} 
\end{split}
\end{equation}
for all $(n+1)$-tuples $\vec {\cic{f}} \in L^\infty_0(\R^d; \mathbb C^N)^{n+1} $ and for a suitable choice of $\mathcal Q=\mathcal Q(\vec {\cic{f}})$. We denote such norm by $\|\cic{T} \|_{(\vec p,\vec r)  }$.  
Our punchline result is the following.
\begin{theorem}\label{sparse:th} Let $\vec p\in [1,\infty)^{n+1}$ and $\vec r$ be as in \eqref{Holder} with the assumption $r_j>p_j$. Then
 \begin{equation}
\label{sparse:eq}
\|\{T_1,\ldots,T_N\} \|_{( \vec{p},\vec r) } \lesssim   \sup_{k=1,\ldots, N}  \| {T}_k \|_{\vec p}.
\end{equation}
The implicit constant depends on the tuples $\vec p$ and $\vec r$ and on the dimension $d$.
\end{theorem}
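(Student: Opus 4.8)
The plan is to route the vector-valued estimate through the scalar sparse bounds and Theorem~\ref{main:th}, so that I never have to produce a common sparse collection by hand. First I would record that, by \eqref{Holder}, a Banach Hölder tuple satisfies $\sum_{j=1}^{n+1}\frac1{r_j}=1$, so the generalized Hölder inequality in the sequence spaces $\ell^{r_j}$ applies with exponents summing to one. Fix $\vec{\cic f}\in L^\infty_0(\R^d;\mathbb C^N)^{n+1}$. For each component index $k$ the scalar sparse bound furnishes a sparse collection $\mathcal Q_k$ with
\[
\left|\l T_k(f^1_k,\ldots,f^n_k),f^{n+1}_k\r\right| \leq \|T_k\|_{\vec p}\sum_{Q\in\mathcal Q_k}|Q|\prod_{j=1}^{n+1}\l f^j_k\r_{p_j,Q}.
\]

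The crux of the argument is the second step, which removes the $k$-dependence of the collections $\mathcal Q_k$: since these vary with $k$ they cannot be merged directly, so instead I would dominate each scalar sparse form by a collection-independent integral. Writing $\mathrm M_{p}g(x):=\sup_{Q\ni x}\l g\r_{p,Q}$ and using the disjoint major subsets $E_Q\subset Q$, $|E_Q|>\frac12|Q|$, of a sparse family together with $\l g\r_{p,Q}\leq \mathrm M_p g(x)$ for $x\in E_Q$, one gets
\[
\sum_{Q\in\mathcal Q_k}|Q|\prod_{j=1}^{n+1}\l f^j_k\r_{p_j,Q}\leq 2\sum_{Q\in\mathcal Q_k}\int_{E_Q}\prod_{j=1}^{n+1}\mathrm M_{p_j}f^j_k(x)\,\d x\leq 2\int_{\R^d}\prod_{j=1}^{n+1}\mathrm M_{p_j}f^j_k(x)\,\d x.
\]
Summing over $k$, exchanging sum and integral, and applying Hölder in the $\ell^{r_j}$ spaces pointwise in $x$ gives
\[
\sum_{k=1}^N\prod_{j=1}^{n+1}\mathrm M_{p_j}f^j_k(x)\leq\prod_{j=1}^{n+1}\Big(\sum_{k=1}^N\big(\mathrm M_{p_j}f^j_k(x)\big)^{r_j}\Big)^{\frac1{r_j}}=\prod_{j=1}^{n+1}\M_{p_j,r_j}(\cic f^j)(x),
\]
where the last identity is the $m=1$ instance of the definition \eqref{FS}. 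Combining the previous displays yields
\[
\left|\l\cic T(\cic f^1,\ldots,\cic f^n),\cic f^{n+1}\r\right|\leq 2\Big(\sup_{k}\|T_k\|_{\vec p}\Big)\int_{\R^d}\prod_{j=1}^{n+1}\M_{p_j,r_j}(\cic f^j)(x)\,\d x.
\]

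The right-hand integral is exactly a form of the type \eqref{form:eq}, associated to the partition of $\{1,\ldots,n+1\}$ into singletons. The final step is therefore to invoke Theorem~\ref{main:th}, which under the hypothesis $r_j>p_j$ produces a single sparse collection $\mathcal Q=\mathcal Q(\vec{\cic f})$ bounding this integral by $\sum_{Q\in\mathcal Q}|Q|\prod_{j=1}^{n+1}\l\|\cic f^j\|_{\ell^{r_j}}\r_{p_j,Q}$, which is precisely the right-hand side of \eqref{vvdef:eq}. I expect the main obstacle to be the $k$-dependence of the scalar collections; the device of passing to the maximal-function integral dissolves it, and Theorem~\ref{main:th} restores the sparse structure in one stroke. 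The implicit constant depends only on $\vec p$, $\vec r$, and $d$, inherited from the constant in Theorem~\ref{main:th}.
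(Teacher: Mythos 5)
Your overall strategy---dominate the scalar sparse forms by a collection-independent maximal integral, then invoke Theorem \ref{main:th} to restore the sparse structure---is the same as the paper's, but the way you decouple the averages creates a genuine gap at the final step. When you bound $\l f^j_k\r_{p_j,Q}\le \mathrm{M}_{p_j}f^j_k(x)$ separately in each $j$ for $x\in E_Q$, you replace the product of averages over the \emph{common} cube $Q$ by a product of independent maximal functions, and after H\"older in $k$ you arrive at the form
\begin{equation*}
\int_{\R^d}\prod_{j=1}^{n+1}\M_{p_j,r_j}(\cic f^j)(x)\,\d x,
\end{equation*}
i.e.\ the singleton-partition case of \eqref{form:eq}. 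For this form, Theorem \ref{main:th} does \emph{not} deliver what you claim: part 1, namely \eqref{main:eq}, only produces averages $\left\langle \|\cic f^j\|_{\ell^{r_j}}\right\rangle_{p_j+\eps,Q}$, with an $\eps$-loss; the lossless estimate \eqref{main2:eq} is available only for the joint maximal function $\M_{\vec p,r}$, in which the product over $j$ sits inside a single supremum over cubes. Since $\M_{\vec p,r}\le \prod_j \M_{p_j,r_j}$ pointwise, your integral is on the wrong (larger) side of this inequality, so you cannot pass back to \eqref{main2:eq}. The remark following Theorem \ref{main:th} makes exactly this point about the $\eps$-loss. Consequently your argument proves $\|\cic T\|_{((p_1+\eps,\ldots,p_{n+1}+\eps),\vec r)}\lesssim \sup_k\|T_k\|_{\vec p}$, which is strictly weaker than \eqref{sparse:eq}: since $\l g\r_{p_j,Q}\le\l g\r_{p_j+\eps,Q}$, a sparse bound with larger exponents does not imply one with the exponents $p_j$ demanded by \eqref{vvdef:eq}.

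The fix is to keep the joint structure, which is precisely the paper's Lemma \ref{lemma1:lemma}: for $x\in E_Q$ bound the whole product at once by
\begin{equation*}
\prod_{j=1}^{n+1}\l f^j_k\r_{p_j,Q}\;\le\;\sup_{Q'\ni x}\,\prod_{j=1}^{n+1}\l f^j_k\r_{p_j,Q'},
\end{equation*}
and then sum over $k$ in $\ell^1$ --- no H\"older inequality is needed here; the Banach H\"older condition $\sum_{j=1}^{n+1}1/r_j=1$ from \eqref{Holder} enters only to identify the relevant vector exponent as $r=1$. This dominates the form by $2\left(\sup_k\|T_k\|_{\vec p}\right)\int_{\R^d}\M_{\vec p,1}(\cic f^1,\ldots,\cic f^{n+1})\,\d x$, to which part 2 of Theorem \ref{main:th}, i.e.\ \eqref{main2:eq} with $m=n+1$ and $r=1$, applies under the hypothesis $p_j<r_j$ and yields exactly the right-hand side of \eqref{vvdef:eq}, with exponents $p_j$ and no loss.
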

\begin{remark}The recent preprint \cite{BM2} contains  a direct proof of $\ell^r$-valued sparse form estimates for   multilinear multipliers with singularity along one-dimensional subspaces, generalizing the paradigmatic bilinear Hilbert transform, as well as for the variation norm Carleson operator. Theorem \ref{sparse:th} thus allows to recover these results of \cite{BM2} from the corresponding scalar valued results previously obtained in \cite{CuDPOu}, which is recalled in \eqref{BHTsparse} below, and \cite{DPDU} respectively. 
\end{remark}

We refer the readers to Subsection \ref{vvfroms} for a proof of Theorem \ref{sparse:th} and proceed with introducing the main theorem concerning sparse bounds of multisublinear forms of type (\ref{form:eq}), whose proof is postponed to Section \ref{SecPf}. 
%The following observation is a sparse strengthening of the estimates \eqref{FS2}: the proof is postponed to Section \ref{SecPf}. %We send to \cite{CoCuDPOu} and references therein, or to the monograph \cite{LerNaz2015}, for the definition of sparse collection and for an introduction to the subject.
\begin{theorem}\label{main:th} Let there be given $m$-tuples $\vec p=(p_1,\ldots,p_m)\in [1,\infty)^{m}, \vec r=(r_1,\ldots, r_{m})\in [1,\infty]^m$ with     \[ \frac1r:=\sum_{j=1}^m \frac{1}{r_j}, \qquad 
 p_j<r_j, \; j=1,\ldots, {m}.    
\]
\noindent
\emph{1.} Let $\eps>0$. There exists a sparse collection $\mathcal Q$ such that
\begin{equation}\label{main:eq}
\int_{\R^d} \prod_{j=1}^m \M_{p_j,r_j}(\cic{f}^j)(x)\,\d x \lesssim \sum_{Q\in \mathcal Q} |Q| \prod_{j=1}^{m} \left\langle \| \cic{f}^{j}\|_{\ell^{r_j}}\right\rangle _{p_j+\eps,Q}.
\end{equation}
The implicit constant is allowed to depend on $\eps>0$, as well as the tuples $\vec p$, $(r_1,\ldots,r_{m})$ and on the dimension $d$.
\vskip1mm\noindent
\emph{2.} There exists a sparse collection $\mathcal Q$, possibly different from above, such that
\begin{equation}\label{main2:eq}
\int_{\R^d} \M_{\vec p,r}(\cic{f}^1,\ldots,\cic{f}^m)(x)\,\d x \lesssim \sum_{Q\in \mathcal{Q}} |Q| \prod_{j=1}^{m} \left\langle \| \cic{f}^{j}\|_{\ell^{r_j}}\right\rangle _{p_j,Q}.
\end{equation}
The implicit constant is allowed to depend on $\vec p$, $(r_1,\ldots,r_{m})$ and $d$.
\end{theorem}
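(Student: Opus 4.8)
The plan is to derive both parts from a single stopping-time construction, obtaining a sparse \emph{form} estimate---rather than a pointwise one---by feeding the vector-valued maximal inequalities \eqref{FS2} and \eqref{FS4} into a layer-cake argument. First I would carry out the standard reductions: invoking the three-grid lemma I replace the supremum over arbitrary cubes in \eqref{FS} by a supremum over a fixed dyadic lattice $\mathcal D$, at the cost of a dimensional constant, and by a routine exhaustion argument I reduce \eqref{main:eq} and \eqref{main2:eq} to a local bound on an arbitrary $Q_0\in\mathcal D$ for the maximal function truncated to cubes $Q\subseteq Q_0$. Writing $g_j:=\|\cic f^j\|_{\ell^{r_j}}$ and $A_Q:=\prod_{j}\l g_j\r_{p_j,Q}$, I construct the sparse family $\mathcal Q$ recursively: the stopping children of $Q_0$ are the maximal $Q'\subsetneq Q_0$ with $A_{Q'}>CA_{Q_0}$, for a large $C=C(d,\vec p,\vec r)$. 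Then $|E_{Q_0}|>\tfrac12|Q_0|$ holds once $C$ is large, by the scalar ($N=1$) weak-type bound \eqref{FS4} applied to $\vec g\mapsto\sup_Q A_Q\cic{1}_{Q}$.

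Running the recursion, for $x\in Q_0$ the relevant cubes $Q\ni x$, $Q\subseteq Q_0$, split into those contained in the stopping child through $x$, which are accounted for at the next generation, and those lying between that child and $Q_0$. On the latter, maximality forces $A_Q\le CA_{Q_0}$, whence the two pointwise facts
\[
\prod_{j}\l f^j_k\r_{p_j,Q}\le A_Q\le CA_{Q_0},\qquad \Big\|\big({\textstyle\sup_{Q}}\prod_j\l f^j_k\r_{p_j,Q}\big)_k\Big\|_{\ell^\infty}\le CA_{Q_0},
\]
the first because $|f^j_k|\le g_j$, the second after maximising in $k$ inside the product. Telescoping, both \eqref{main:eq} and \eqref{main2:eq} reduce to the single-scale estimate
\[
\int_{Q_0} \Big\|\big({\textstyle\sup_{Q\ni x,\,Q\subseteq Q_0,\,A_Q\le CA_{Q_0}}}\prod_j\l f^j_k\r_{p_j,Q}\big)_k\Big\|_{\ell^r}\,\d x \;\lesssim\; |Q_0|\,A_{Q_0},
\]
together with its variant at exponents $p_j+\eps$ for Part 1.

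I would prove this by a layer-cake decomposition at height $\lambda_0\sim A_{Q_0}$. The truncated maximal function is dominated by $\M_{\vec p,r}(\cic f^1\cic{1}_{Q_0},\ldots,\cic f^m\cic{1}_{Q_0})$, so the portion below $\lambda_0$ costs at most $\lambda_0|Q_0|\lesssim |Q_0|A_{Q_0}$, while the tail $\lambda>\lambda_0$ is summed from the distributional estimate supplied by \eqref{FS2}/\eqref{FS4} and the identity $\|g_j\cic{1}_{Q_0}\|_{p_j}=|Q_0|^{1/p_j}\l g_j\r_{p_j,Q_0}$. When $p:=(\sum_j p_j^{-1})^{-1}>1$ the weak-type bound \eqref{FS4} already renders the tail integrable and reproduces exactly the exponents $p_j$ of \eqref{main2:eq}. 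For Part 1, the surplus $\eps$ is spent instead on the strong bounds \eqref{FS2}: applying Fefferman--Stein to each factor $\M_{p_j,r_j}(\cic f^j\cic{1}_{Q_0})$ at exponent $q_j=p_j+\eps$ and combining by H\"older produces the product form \eqref{main:eq}, the point being that $q_j>p_j$ places every factor inside the strong-type range.

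The step I expect to be the main obstacle is the single-scale estimate when $p\le 1$, a range that genuinely occurs since only $p_j\ge 1$ is assumed and $m$ may be large. Two difficulties interact here. First, the $\ell^r$-summation rules out any pointwise sparse bound: estimating each component by $A_Q$ costs a factor $N^{1/r}$, so one must argue at the level of the form, which is precisely where the $N$-independent vector-valued inequalities \eqref{FS2}, \eqref{FS4} are indispensable. Second, for $p\le 1$ the space $L^{p,\infty}$ fails to be locally integrable, so a crude tail summation diverges, and capping at the trivial height reintroduces the forbidden $N^{1/r}$. The resolution must use the stopping restriction $A_Q\le CA_{Q_0}$ more forcefully: it excises exactly those small cubes on which the $g_j$ concentrate, and I would exploit this to upgrade \eqref{FS4} to a \emph{localized strong} $L^s(Q_0)$ bound, for some $s\ge 1$, whose constant is governed by the $p_j$-averages $A_{Q_0}$ rather than by larger ones; such a bound integrates to $|Q_0|A_{Q_0}$ by H\"older. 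Making this taming of concentration quantitative is the technical heart of the matter; granting it, summation of the single-scale estimates over $\mathcal Q$ yields \eqref{main:eq} and \eqref{main2:eq}.
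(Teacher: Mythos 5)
Your global architecture (dyadic reduction, stopping cubes, telescoping into a single-scale estimate plus recursion) matches the paper's, but the engine you propose for the single-scale estimate fails precisely in the regime the theorem is about, and you have not supplied a substitute. For Part 2 you integrate the weak-type bound \eqref{FS4} over heights $\lambda>\lambda_0$: the tail is $\int_{\lambda_0}^\infty \big(|Q_0|^{1/p}A_{Q_0}/\lambda\big)^p\,\d\lambda$ with $p=(\sum_j p_j^{-1})^{-1}$, which diverges whenever $p\le 1$ --- already for $m=1$, $p_1=1$ (the classical Fefferman--Stein case), and for every $m\ge 2$ with all $p_j$ near $1$. You flag this yourself, but the proposed repair (``upgrade \eqref{FS4} to a localized strong bound using $A_Q\le CA_{Q_0}$'') is never carried out, and its plausibility is doubtful: your stopping condition constrains only the scalar averages $A_Q=\prod_j\langle\|\cic f^j\|_{\ell^{r_j}}\rangle_{p_j,Q}$, which caps each component $\prod_j\langle f^j_k\rangle_{p_j,Q}$ (the $\ell^\infty$ norm in $k$) but says nothing about the $\ell^r$ sum, which is exactly where the weak-type blow-up lives. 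Part 1 has a second, unacknowledged gap: ``combining by H\"older'' the bounds on $\|\M_{p_j,r_j}(\cic f^j\cic 1_{Q_0})\|_{L^{p_j+\eps}}$ requires $\sum_j 1/(p_j+\eps)\le 1$; when this sum exceeds $1$ (again the typical case for $m\ge 2$) no such H\"older inequality exists, and the generalized H\"older inequality into $L^q$ with $q<1$ bounds the wrong quantity, since $\|\cdot\|_{L^q(Q_0)}$ does not dominate $\int_{Q_0}$. Any H\"older route forces exponents $s_j$ with $\sum_j 1/s_j\le 1$, so it can only ever produce sparse averages at such $s_j$, never the claimed $p_j+\eps$; the strength of \eqref{main:eq} lies exactly in allowing $\sum_j 1/(p_j+\eps)>1$. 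So, as written, your argument proves both parts only when $\sum_j 1/p_j$ is at most (roughly) $1$, a strictly smaller range than claimed.

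The paper's proof avoids both obstacles by stopping on the maximal functions themselves rather than on the averages $A_Q$, and by never integrating the weak/strong inputs over heights. For Part 2 the exceptional set is $E_Q=\{x\in Q:\,\mathsf{A}^Q(\vec{\cic f})(x)\ge C\}$, where $\mathsf{A}^Q$ is the localized truncated vector-valued maximal operator; \eqref{FS4} is used once, at the single height $C$, only to make $|E_Q|$ small, which works for every $p>0$. Outside $E_Q$ the integrand is pointwise $O(1)$ by construction; inside each stopping cube $L$ the scales split at $\ell(L)$: fine scales recurse into $\Lambda_L$, while coarse scales are bounded pointwise by the lower-semicontinuity Lemma \ref{trunc:lemma}, which transports the $O(1)$ bound from a nearby point outside $E_Q$ into $L$. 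For Part 1 the product is handled with no H\"older coupling at all: one takes a separate exceptional set $\{\M\circ\mathsf{A}_j^Q(\cic f^j)\ge C\}$ for each factor, whose measure is controlled by the strong bound \eqref{FS2} at exponent $p_j+\eps$ (the only place $\eps$ enters), so that outside the union every factor --- hence the product --- is pointwise $O(1)$ after normalization. To salvage your approach you would need to replace the stopping condition $A_{Q'}>CA_{Q_0}$ by level sets of the localized (vector-valued) maximal functions, prove the semicontinuity lemma, and use \eqref{FS2}/\eqref{FS4} only to estimate the measure of those level sets; at that point you have reconstructed the paper's proof.
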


\begin{remark}
An immediate consequence of Theorem \ref{main:th} is a   sparse bound for multisublinear forms involving any $\M_{\vec p,r}$. More precisely, for any partition $\mathcal{I}:=\{I_1,\ldots,I_s\}$ of $\{1,\ldots,m\}$, there exists a sparse collection $\mathcal Q$ (depending on $\mathcal{I}$) such that
\begin{equation}
\label{main3:eq}
\int_{\R^d}   \prod_{i=1}^s\M_{\vec p_i,r_i}( \cic{f}^{(i)})(x) \, \d x  \lesssim \sum_{Q\in \mathcal Q} |Q| \prod_{j=1}^{m} \left\langle \| \cic{f}^{j}\|_{\ell^{r_j}}\right\rangle _{p_j+\eps,Q}.
\end{equation}
We do point out that even though the $s=1$ case of (\ref{main3:eq}), i.e. when the partition $\mathcal{I}$ contains only $\{1,\ldots,m\}$ itself, already implies a sparse bound for the form on the left hand side of (\ref{main2:eq}), it fails to recover the full strength of (\ref{main2:eq}) due to the $\eps$-loss.
\end{remark}
%\begin{remark} {\color{red}{I would like to rephrase this remark in a way that it is not saying our estimate (\ref{main2:eq}) is already known. Maybe only comment that ``compared to (\ref{main2:eq}), the $\epsilon$ loss in (\ref{main:eq}) seems unavoidable due to the simultaneous presence of at least two interacting maximal operators''?}} Standard sparse domination techniques, for instance the proof of  \cite[Theorem 2.4]{Lac2015},
%yield  
%\[ 
%\int_{\R^d}   \M_{\vec p,r}( \cic{f}^{1},\ldots,\cic{f}^{m-1})(x) g(x) \, \d x  \lesssim \sum_{Q\in \mathcal Q} |Q| \left(\prod_{j=1}^{m-1} \left\langle \| \cic{f}^{j}\|_{\ell^{r_j}}
%\right\rangle_{p_j,Q} \right)\langle  g 
% \rangle_{1,Q},\quad \frac{1}{r}:=\sum_{j=1}^{m-1}\frac{1}{r_j},
% \]
%for a suitable sparse collection $\mathcal Q$ depending on $\cic{f}^{1},\ldots,\cic{f}^{m-1}$; in fact, the proof yields a stronger pointwise sparse bound for $\M_{\vec p,r}$. The left hand side of the last display is smaller than that of \eqref{main:eq} even when $p_{n+1}=1$. However, the appearance of the extra $\eps>0$ in \eqref{main:eq} seems unavoidable with the known proof techniques, due to the simultaneous presence of two interacting maximal operators.
%\end{remark}

\subsection{Vector valued sparse estimates from scalar ones}\label{vvfroms}
In this subsection we prove Theorem \ref{sparse:th}, with the key ingredients being (\ref{main2:eq}) and the following observation, which we record as a lemma; a similar statement may be found in the argument following \cite[Appendix A, (A.8)]{CuDPOu}.
\begin{lemma} \label{lemma1:lemma}Let   $\vec {\cic{f}} \in L^\infty_0(\R^d; \mathbb C^N)^{n+1} $. Then 
\begin{equation}
\label{lemma1:eq} 
|\l  \cic{T}(\cic{f}^1,\ldots,\cic{f}^{n}), \cic{f}^{n+1}\r|  \leq 2 \left(\sup_{k=1,\ldots, N}  \| {T}_k \|_{\vec p} \right) 
\int_{\R^d}  \M_{\vec p,1}( \cic{f}^{1},\ldots,\cic{f}^{n+1})\,\d x
\end{equation}
where $\vec p=(p_1,\ldots,p_{n+1})$.
\end{lemma}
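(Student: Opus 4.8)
The plan is to reduce the vector-valued form to a sum of $N$ scalar sparse forms, one per coordinate $k$, and then to convert each scalar sparse form back into an integral against the corresponding component of the $\ell^1$-valued maximal function $\M_{\vec p,1}$. Unwinding the definition of the action of $\cic T$, what must be estimated is
\[
\left| \sum_{k=1}^N \l  {T}_k( {f}^1_k,\ldots, f^{n}_k),  {f}^{n+1}_k\r \right|.
\]
First I would apply, for each fixed $k$, the definition of the scalar sparse $\vec p$-norm $\|T_k\|_{\vec p}$ to the admissible $(n+1)$-tuple $( {f}^1_k,\ldots, f^{n+1}_k)\in L^\infty_0(\R^d;\mathbb C)^{n+1}$. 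This produces, for each $k$, a sparse collection $\mathcal Q_k$ (in general depending on $k$) with
\[
|\l  {T}_k( {f}^1_k,\ldots, f^{n}_k),  {f}^{n+1}_k\r| \leq \|T_k\|_{\vec p}\sum_{Q\in\mathcal Q_k} |Q| \prod_{j=1}^{n+1} \l f^j_k \r_{p_j,Q} \leq \Big(\sup_{k=1,\ldots,N} \|T_{k}\|_{\vec p}\Big) \sum_{Q\in\mathcal Q_k} |Q| \prod_{j=1}^{n+1} \l f^j_k \r_{p_j,Q}.
\]

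The key step is the conversion of each scalar sum into an integral. Fixing $k$ and letting $\{E_Q:Q\in\mathcal Q_k\}$ be the pairwise disjoint sets witnessing the sparseness of $\mathcal Q_k$, I would use $|Q|<2|E_Q|$ together with the pointwise bound $\prod_{j} \l f^j_k \r_{p_j,Q}\leq \sup_{Q'\ni x}\prod_{j} \l f^j_k \r_{p_j,Q'}$, valid for every $x\in E_Q\subset Q$. Since the $E_Q$ form a disjoint family of subsets of $\R^d$, summing over $Q\in\mathcal Q_k$ gives
\[
\sum_{Q\in\mathcal Q_k} |Q| \prod_{j=1}^{n+1} \l f^j_k \r_{p_j,Q} \leq 2\sum_{Q\in\mathcal Q_k} \int_{E_Q} \sup_{Q'\ni x}\prod_{j=1}^{n+1} \l f^j_k \r_{p_j,Q'}\,\d x \leq 2\int_{\R^d} \sup_{Q'\ni x}\prod_{j=1}^{n+1} \l f^j_k \r_{p_j,Q'}\,\d x,
\]
and the integrand on the right is precisely the $k$-th entry of the $\ell^1$ expression defining $\M_{\vec p,1}$.

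The final step is to sum the two displays over $k=1,\ldots,N$ and interchange the finite sum with the integral, which is legitimate as all integrands are nonnegative. Recognizing that $\sum_{k=1}^N \sup_{Q'\ni x}\prod_{j=1}^{n+1}\l f^j_k\r_{p_j,Q'}=\M_{\vec p,1}(\cic f^1,\ldots,\cic f^{n+1})(x)$ recovers exactly \eqref{lemma1:eq}, with the constant $2$ originating solely from the sparseness inequality $|Q|<2|E_Q|$. I do not expect a genuine obstacle here; the only subtlety worth flagging is that the sparse families $\mathcal Q_k$ depend on $k$ and need not be compatible across coordinates. This causes no difficulty precisely because the $\ell^1$ structure lets us dominate each coordinate by an integral \emph{before} summing, so no common sparse collection is ever needed—an observation that is the crux of why the scalar-to-vector passage is essentially free at the level of the maximal form.
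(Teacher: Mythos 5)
Your proof is correct and follows essentially the same route as the paper's: apply the scalar sparse bound coordinatewise with $k$-dependent sparse collections, use the disjoint major subsets $E_Q$ with $|Q|<2|E_Q|$ to dominate each sparse sum by an integral of the pointwise supremum, and then sum over $k$ to recognize $\M_{\vec p,1}$. The paper merely packages the same computation through auxiliary functions $F_k=\sum_{Q\in\mathcal Q_k}\bigl(\prod_{j}\l f^j_k\r_{p_j,Q}\bigr)\cic 1_{E_Q}$, so there is no substantive difference.
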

%\todo[inline,color=green!40]{
%If $E_1 =\{|\cic{T}({f}^1,\ldots,\cic{f}^{n})>1\}$ 
%\[
%v(E_{1}) \leq \tilde E_1 \cic{T}(\cic{f}^1,\ldots,\cic{f}^{n}), \cic{f}^{n})
%\]
%}
%\todo[inline,color=green!40]{New (1.6):
%\[
%|\l  \cic{T}(\cic{f}^1,\ldots,\cic{f}^{n}), \cic{f}^{n+1}\r|  \leq 2 \left(\sup_{k=1,\ldots, N}  \| {T}_k \|_{\vec p} \right) 
%\int_{\R^d} \M_{(p_1,\ldots,p_{n+1}),1}( \cic{f}^{1},\ldots,  \cic{f}^{n+1})(x)\,\d x
%\]
%After (1.6) you can actually prove by the sparse bound 
%\[\begin{split}
%\int_{\R^d} \M_{(p_1,\ldots,p_{n+1}),1}( \cic{f}^{1},\ldots,  \cic{f}^{n+1})(x)\,\d x &\lesssim \sum_{Q\in \mathcal Q} |Q| \prod_{j=1}^{n+1} \left\langle \| \cic{f}^{j}\|_{\ell^{r_j}}
%\right\rangle _{p_j,Q}  \\ &\lesssim  \int_{\R^d} \M_{(p_1,\ldots,p_{n+1}),1}( \|\cic{f}^{1}\|_{\ell^{r_1}},\ldots,   \|\cic{f}^{n+1}\|_{\ell^{r_{n+1}}})(x)\,\d x  
%\end{split} 
%\]
%}
\begin{proof} Normalize  $\| {T}_k \|_{\vec p}=1$ for $k=1,\ldots,N$. Using the definition, for $k=1,\ldots,N$ we may find sparse collections $\mathcal Q_1,\ldots,\mathcal Q_N$ such that
\[
|\l  {T}_k( {f}^1_k,\ldots, f^{n}_k),  {f}^{n+1}_k\r| \leq \sum_{Q_k \in   \mathcal Q_k } |Q_k| \prod_{j=1}^{n+1} 
\left\langle   {f}^{j}_k
\right\rangle _{p_j,Q_k} \leq 2\int_{\R^d} F_k(x)\,\d x, 
\]
having defined 
\[
F_k = \sum_{Q_k \in \mathcal Q_k} \left(\prod_{j=1}^{n+1} 
\left\langle   {f}^{j}_k
\right\rangle_{p_j,Q_k}\right)\cic{1}_{E_{Q_k}},
\]
where the last inequality follows from the pairwise disjointness of the distinguished major subsets $E_{Q_k}\subset Q_k$, with  $2|E_{Q_k}|\geq| Q_k|$. Therefore, 
\[
|\l \cic{T}(\cic{f}^1,\ldots,\cic{f}^{n}), \cic{f}^{n+1}\r| \leq  2 \int_{\R^d} \M_{\vec{p},1}(\cic{f}^1,\ldots,\cic{f}^{n+1})(x)\,\d x. 
\]
%and the proof is completed by the easy observations that\[
%F\leq \M_{\vec p^{(n)},r}( \cic{f}^{1},\ldots,\cic{f}^{n}), \qquad
%G \leq \M_{p_{n+1},r_{n+1}}( \cic{f}^{n+1})
%\]
%pointwise. We include the  details for the first inequality, the one involving $G$ being similar but simpler.   Fix $x\in \R^d $. Let $  K(x)$ be the set of those $k$ with the property that there exist    $Q_k\in \mathcal Q_k$ such that $x\in E_{Q_k}$. Such a $Q_k$ must necessarily be unique, so we may denote it by $Q_k(x)$. Then
%\[ 
%F(x)^r= \sum_{k \in K(x)} \left(\prod_{j=1}^{n} 
%\left\langle   {f}^{j}_k
%\right\rangle _{p_j,Q_k(x)}  
%\right)^r  \leq \sum_{k \in K(x)} \left(  \sup_{Q \ni x}\prod_{j=1}^{n} 
%\left\langle   {f}^{j}_k
%\right\rangle _{p_j,Q }  
%\right)^r  \leq \M_{\vec p^{(n)},r}( \cic{f}^{1},\ldots,\cic{f}^{n})(x)^r  \]
% which concludes the proof of the pointwise bound.
\end{proof}
Theorem \ref{sparse:th} then immediately follows from Lemma \ref{lemma1:lemma} recalling (\ref{main2:eq}).
\begin{remark}
Lemma \ref{lemma1:lemma} obviously applies to any $(n+1)$-sublinear form $\cic{\Lambda}(\cic{f}^1,\ldots,\cic{f}^{n+1})$, not necessarily of the form $\l \cic{T}(\cic{f}^1,\ldots,\cic{f}^{n}), \cic{f}^{n+1}\r$.  We then record the following observation: in the scalar valued case $N=1$, there holds the equivalence
\begin{equation}\label{equiv:eq}
\sup_{  \mathcal Q \, \mathrm{sparse}}\sum_{Q\in\mathcal{Q}}|Q|\prod_{j=1}^m\left\langle f^j\right\rangle_{p_j,Q}\sim \int_{\R^d} \M_{\vec{p}}(f^1,\ldots,f^m)(x)\,\d x.
\end{equation}
Incidentally, this is an alternative proof of the useful ``one form rules them all'' principle of Lacey and Mena Arias \cite[Lemma 4.7]{LMena}.
Indeed, (\ref{equiv:eq}) follows from applying Lemma \ref{lemma1:lemma} to the case $N=1$ and to the $m$-sublinear form on the left hand side of (\ref{equiv:eq}). Such an  equivalence does not seem to hold in the vector-valued case.\end{remark}

}}

\section{Proof of {{Theorem}} \ref{main:th}} \label{SecPf}
{{The proof of the main result is iterative in nature and borrows some of the ingredients from the related articles \cite{CoCuDPOu,DPHL}. Throughout, we assume that the tuples $\vec p=(p_1,\ldots,p_m)$ and $(r_1,\ldots,r_{m})$ as in the statement of Theorem \ref{main:th} are fixed. We first prove part 1, and the proof of part 2, which is very similar and is in fact simpler, will be given at the end of the section.

\subsection{Truncations and a simple lemma} 
We start by defining suitable truncated versions of the Fefferman-Stein maximal functions \eqref{FS}. For $s,t>0$, write
\[
\mathsf{A}_j^{s,t} {\cic f^j}  := 
\left\|\sup_{s<\ell(Q)\leq t} \l f^j_k \r_{p_j,Q} \cic{1}_{Q}  \right\|_{\ell^ {r_j}(\mathbb C^N)}, \qquad j=1,\ldots,m.
\]
Note that $\forall j$,
\begin{equation}
\label{formal:eq}   \sup_{s<t}\mathsf{A}_j^{s,t}{\cic{f}^j} = \M_{p_j,r_j} {\cic{f}^j}.
\end{equation}
We will be using the following key lemma, which is simply the lower semicontinuity property of truncated maximal operators.
\begin{lemma} \label{trunc:lemma}Let $x,x_0\in \R^d$ and $s\gtrsim\mathrm{dist}(x_0,x)$. Then
\[
\mathsf{A}_j^{s,t} {\cic{f}^j} (x) \lesssim  \mathsf{A}_j^{s,t} {\cic{f}^j} (x_0).
\]
\end{lemma}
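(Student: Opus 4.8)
The plan is to strip off the $\ell^{r_j}(\mathbb C^N)$-norm and reduce to a purely scalar statement. For each coordinate $k$ introduce the truncated scalar maximal function
\[
M_j^{s,t}f^j_k(x):=\sup\bigl\{\langle f^j_k\rangle_{p_j,Q}\ :\ x\in Q,\ s<\ell(Q)\le t\bigr\},
\]
so that $\mathsf{A}_j^{s,t}\cic{f}^j(x)=\bigl\|(M_j^{s,t}f^j_k(x))_{k=1}^N\bigr\|_{\ell^{r_j}}$, with the supremum defining each coordinate sitting \emph{inside} the norm. Since the $\ell^{r_j}$-norm is monotone, it suffices to prove the coordinatewise comparison $M_j^{s,t}f^j_k(x)\lesssim M_j^{s,t}f^j_k(x_0)$ with an implicit constant uniform in $k$; the freedom to let each coordinate choose its own competing cube is exactly what this reduction buys us.

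For the scalar comparison I would argue at the level of individual competitors. Fix $k$ and an admissible cube $Q\ni x$ with $s<\ell(Q)\le t$. Because $\mathrm{dist}(x_0,x)\lesssim s<\ell(Q)$ and $x\in Q$, the point $x_0$ lies at $\ell^\infty$-distance at most $\lesssim\ell(Q)$ from $Q$; hence there is a cube $Q'\supseteq Q$ with $x_0\in Q'$ and $\ell(Q')\le C_d\,\ell(Q)$, where $C_d=1+c$ and $c$ is governed by the constant hidden in $s\gtrsim\mathrm{dist}(x_0,x)$. Monotonicity of the integral under the inclusion $Q\subseteq Q'$ then gives
\[
\langle f^j_k\rangle_{p_j,Q}\le\bigl(|Q'|/|Q|\bigr)^{1/p_j}\langle f^j_k\rangle_{p_j,Q'}\le C_d^{\,d}\,\langle f^j_k\rangle_{p_j,Q'}\le C_d^{\,d}\,M_j^{s,t}f^j_k(x_0),
\]
using $p_j\ge1$ and that $Q'$ is admissible for $x_0$. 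Taking the supremum over admissible $Q\ni x$ yields the scalar bound, and the $\ell^{r_j}$-norm then closes the argument.

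The one genuinely delicate point---and the main obstacle---is the upper truncation $\ell(Q')\le t$. When $x_0\in Q$ we simply take $Q'=Q$; but when $x_0\notin Q$ and $\ell(Q)$ sits at the top scale $t$, any cube containing $Q\cup\{x_0\}$ must have $\ell(Q')>t$, so it is not admissible for $x_0$ at the same truncation. This is intrinsic to the strict containment $Q\subseteq Q'$ and cannot be evaded by a cleverer choice. I would dispose of it using the slack in the hypothesis: choosing the constant in $s\gtrsim\mathrm{dist}(x_0,x)$ large makes $c$, and hence the excess in $\ell(Q')\le(1+c)t$, as small as desired, so that the right-hand side is in truth controlled by $\mathsf{A}_j^{s,(1+c)t}\cic{f}^j(x_0)$. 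In the iteration of Section \ref{SecPf} the truncation parameters run over a dyadic family, where a fixed multiplicative change in the top scale is absorbed into the implicit constant; equivalently, the comparison is read with the competing cube for $x_0$ allowed a sidelength up to a harmless dimensional multiple of $t$. Everything else is routine.
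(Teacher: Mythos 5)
The paper never actually proves Lemma \ref{trunc:lemma} --- it is asserted to be ``simply the lower semicontinuity property of truncated maximal operators'' --- and your argument (coordinatewise reduction inside the monotone $\ell^{r_j}$-norm, then enlargement of a competitor cube $Q\ni x$ to a cube $Q'\supseteq Q\cup\{x_0\}$ with $\ell(Q')\le C\ell(Q)$, paying the factor $(|Q'|/|Q|)^{1/p_j}$) is exactly the standard argument that phrase refers to. Moreover, the ``delicate point'' you isolate is genuine, and in fact worse than delicate: with the same upper truncation $t$ on both sides, the stated inequality is false. In $d=1$ take $f=\cic 1_{[0,\epsilon]}$, $Q=[0,t]$, $x=t$, $x_0=t+c_0s$ with $\epsilon<c_0s<t$, where $c_0$ is the constant implicit in $s\gtrsim\mathrm{dist}(x_0,x)$: every interval of length at most $t$ containing $x_0$ lies in $[c_0s,\infty)$ and hence misses the support of $f$, so $\mathsf{A}_j^{s,t}f(x_0)=0$ while $\mathsf{A}_j^{s,t}f(x)\ge(\epsilon/t)^{1/p_j}>0$. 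So the dilated conclusion $\mathsf{A}_j^{s,t}\cic{f}^j(x)\lesssim \mathsf{A}_j^{s,Ct}\cic{f}^j(x_0)$ that your method yields is not a defect of your method; it is the correct form of the lemma.

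The one step of yours that does not survive scrutiny is the closing claim that ``a fixed multiplicative change in the top scale is absorbed into the implicit constant'': the same example shows $\mathsf{A}_j^{s,Ct}f(x_0)>0=\mathsf{A}_j^{s,t}f(x_0)$, so no constant relates the two truncations pointwise; likewise, making $c_0$ small only pushes the enlargement factor toward $1$ without removing it. What actually makes the dilation harmless is the way the lemma is consumed, namely in proving \eqref{proof:eq:13} inside Lemma \ref{iter:lemma}. There $s=\ell(L)$, $t=\ell(Q)$, one has reduced to $\cic{f}^j=\cic{f}^j\cic{1}_{3Q}$, and one has normalized $\langle\|\cic{f}^j\|_{\ell^{r_j}}\rangle_{p_j+\eps,3Q}=1$. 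Any cube $R\ni x_0$ with $\ell(Q)<\ell(R)\le C\ell(Q)$ satisfies $|R|\gtrsim|Q|$, so coordinatewise $\langle f^j_k\rangle_{p_j,R}\le(|3Q|/|R|)^{1/p_j}\langle f^j_k\rangle_{p_j,3Q}\lesssim\langle f^j_k\rangle_{p_j,3Q}$; taking $\ell^{r_j}$-norms and applying Minkowski's integral inequality (here $r_j>p_j$ enters) bounds the contribution of the scales in $(\ell(Q),C\ell(Q)]$ by a constant multiple of $\langle\|\cic{f}^j\|_{\ell^{r_j}}\rangle_{p_j,3Q}\le 1$. Thus $\mathsf{A}_j^{\ell(L),C\ell(Q)}\cic{f}^j(x_0)\lesssim \mathsf{A}_j^{\ell(L),\ell(Q)}\cic{f}^j(x_0)+1$, which is all that the derivation of \eqref{proof:eq:13} requires. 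With your absorption claim replaced by this verification (or by simply stating and using the lemma in its dilated form), your proof is complete and does everything the paper needs.
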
 
%\todo[inline,color=red!40]{(FDP 07/21) There is nothing special about the Fefferman-Stein maximal functions in this argument. The same argument will produce a sparse domination by $\vec q=(q_1,\ldots,q_n)$ averages whenever the operators $\mathsf{A}^{s,t}$ map boundedly into $L^{q_j}$ and the property of Lemma \ref{trunc:lemma} holds. I am not sure it is worth recording this abstract observation.}

\subsection{Main argument} We work with a fixed $\delta>0$; we will let $\delta\to 0 $  in the limiting argument appearing  below.
For a cube $Q$ we  define  further localized versions as
\begin{equation}
\label{defaq}
\mathsf{A}_j^{Q} (\cic{f}^j):=  \cic{1}_Q \mathsf{A}_j^{\delta,\ell(Q)} (\cic{f}^j) =  \cic{1}_Q \mathsf{A}_j^{\delta,\ell(Q)}({\cic{f}^j} \cic{1}_{3Q})\end{equation}
where the the last inequality follows from support consideration.
%\todo[inline,color=blue!30]{(FDP 07/21)   To avoid some of the details one could work with the dyadic versions. See the commented out part for the relevant notations.}
%
%% which I keep denoting the same way:\[
%\mathsf{M}_j^{s,t} \cic{f}   :=   \left\|\sup_{Q \in \mathcal {D}:s<\ell(Q)\leq t} \l f_k \r_{p_j,Q} \cic{1}_{Q}  \right\|_{\ell^ {r_j}}, \qquad  s, t>0.
%\]
%and then the localized operator is fully supported:
%\[
%\mathsf{A}_j^{Q} \cic{f}:=  \cic{1}_Q \mathsf{M}_j^{\delta, \ell(Q)} \cic{f}  =  \cic{1}_Q \mathsf{M}_j^{\delta, \ell(Q) }( \cic{f} \cic{1}_{Q})
%\]
%This removes the need for the harder ``sparse from stopping collection'' arguments of the APDE paper. I think we could state the result for the full maximal function and prove it in the dyadic case, keeping in mind that the continuous result does not follow from the dyadic one plus three grid lemma, because one should allow the grid to vary with $j$ to do so, thereby removing the dyadic advantage. But the proof in the continuous case is exactly the same up to the more complicated covering arguments of the APDE paper.

%While the final estimate is symmetric in the indices $1,\cdots,m$, we break the symmetry in the course of the proof: this is a minor detail which can be avoided when dealing with the dyadic versions.}
By standard  limiting and  translation invariance arguments,   \eqref{main:eq} is reduced to the following sparse estimate: if $Q$ is a cube belonging to one of the $3^d$ standard dyadic grids, then
\begin{equation}
\label{proof:eq:1}
\Lambda_{Q}(\cic{f}^1,\ldots,\cic{f}^{m}):= \int_{Q} \prod_{j=1}^m \mathsf{A}_j^{Q} ({\cic{f}^j})   (x)\,\d x   \lesssim \sum_{L\in \mathcal Q} |L| \prod_{j=1}^{m} \left\langle \| \cic{f}^{j}\|_{\ell^{r_j}}
\right\rangle _{p_j+\eps,L}
\end{equation}
uniformly over $\delta>0$, where $\mathcal{Q}$ is a stopping collection of pairwise disjoint cubes. Estimate \eqref{proof:eq:1} follows by iteration of the following  lemma: the iteration procedure is identical to the one used, for instance, in the proof of  \cite[Theorem 3.1]{Ler2016} and is therefore omitted.
\begin{lemma}\label{iter:lemma} There exists a constant $\Theta$, uniform in the data below, such that the following holds.  Let $Q$ be a dyadic cube and $ ( \cic{f}^1,\ldots,\cic{f}^{m})\in L^\infty_0(\R^d;\mathbb C^N)^{m}.$ Then there exists a collection $L\in \mathcal Q$  of pairwise disjoint dyadic subcubes of $Q$ such that
\[
\sum_{L\in \mathcal Q} |L| \leq 2^{-16}|Q|
\]
and
\[
\Lambda_{Q}(\cic{f}^1,\ldots,\cic{f}^{m}) \leq \Theta |Q| \prod_{j=1}^{m} \left\langle \| \cic{f}^{j}\|_{\ell^{r_j}}
\right\rangle _{3Q,p_j+\eps}+ \sum_{\substack{L \in \mathcal Q} } \Lambda_{L}(\cic{f}^1,\ldots,\cic{f}^{m}).
\]
\end{lemma}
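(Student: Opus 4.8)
The plan is to prove Lemma \ref{iter:lemma} by a stopping-time construction localised to the cube $Q$, which isolates a principal term that can be fed to the strong-type Fefferman--Stein bounds \eqref{FS2} and a self-similar term built from the stopping cubes. Throughout write $g^j:=\|\cic f^j\|_{\ell^{r_j}}$ and $\alpha_j:=\langle g^j\rangle_{p_j+\eps,3Q}$, so that the target principal bound reads $\Theta|Q|\prod_j\alpha_j$.

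\emph{Stopping cubes and packing.} First I would let $\mathcal Q$ be the collection of maximal dyadic subcubes $L\subsetneq Q$ for which $\langle g^j\rangle_{p_j+\eps,3L}>C_0\alpha_j$ for at least one index $j$, with $C_0=C_0(d,m,\vec p,\eps)$ a large constant to be fixed at the end. By maximality each $L\in\mathcal Q$ is contained in a level set $\{M_{p_j+\eps}(g^j\cic 1_{3Q})>C_0\alpha_j\}$ for some $j$, where $M_{p_j+\eps}$ is the $(p_j+\eps)$-maximal operator (which is of weak type $(p_j+\eps,p_j+\eps)$). Using the identity $\|g^j\cic 1_{3Q}\|_{p_j+\eps}^{p_j+\eps}=\alpha_j^{p_j+\eps}\,|3Q|$, the weak-type inequality yields $\sum_{L\in\mathcal Q}|L|\le\sum_j C_0^{-(p_j+\eps)}3^d|Q|$, so choosing $C_0$ large enough gives the required packing $\sum_{L\in\mathcal Q}|L|\le 2^{-16}|Q|$. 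The construction also guarantees that every cube $P$ not contained in any $L\in\mathcal Q$ satisfies $\langle g^j\rangle_{p_j+\eps,3P}\le C_0\alpha_j$ for all $j$.

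\emph{Splitting off the recursive scales.} Fix $x\in Q$, and for each $j$ split the cubes $P\ni x$ with $\delta<\ell(P)\le\ell(Q)$ defining $\mathsf A_j^Q(\cic f^j)(x)$ into those contained in the stopping cube $L(x)\in\mathcal Q$ containing $x$ (if one exists) and those not contained in any stopping cube. Using $\max(a,b)^{r_j}\le a^{r_j}+b^{r_j}$ inside the $\ell^{r_j}$ sum, followed by the triangle inequality in $\ell^{r_j}$, this gives the pointwise bound
\[
\mathsf A_j^Q(\cic f^j)(x)\le \mathsf B_j(x)+\cic 1_{L(x)}(x)\,\mathsf A_j^{L(x)}(\cic f^j)(x),
\]
where $\mathsf B_j$ is the truncated vector maximal function assembled only from cubes not contained in any stopping cube; the reduction of the inner scales to $\mathsf A_j^{L}$ uses the localisation to $3L$ and dyadic nesting exactly as in \eqref{defaq}, while Lemma \ref{trunc:lemma} ensures $\mathsf B_j$ is essentially constant on each $L$ and non-increasing, up to a uniform constant, as one passes from $L$ to its ancestors. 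Taking the product over $j$, integrating over $Q$, and using the pairwise disjointness of $\mathcal Q$, the fully-inner term reproduces exactly $\sum_{L\in\mathcal Q}\Lambda_L$, while every remaining term carries at least one factor $\mathsf B_j$.

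\emph{Principal estimate and the main obstacle.} It remains to bound the terms carrying at least one $\mathsf B_j$ by $\Theta|Q|\prod_j\alpha_j$. Here I would apply the scalar-target strong-type bounds $\M_{p_j,r_j}:L^{q_j}(\R^d;\ell^{r_j})\to L^{q_j}(\R^d)$, valid for any $q_j>p_j$, together with a generalised H\"older inequality on $Q$ (choosing $q_j>p_j$ with $\sum_j 1/q_j\le 1$ and compensating with a power of $|Q|$). The decisive point is that $\mathsf B_j$ only sees cubes on which $\langle g^j\rangle_{p_j+\eps,3P}\le C_0\alpha_j$, so the stopping construction should force the relevant $L^{q_j}(Q)$ norms to be of size $\alpha_j|Q|^{1/q_j}$ rather than the uncontrolled $\langle g^j\rangle_{q_j,3Q}\,|Q|^{1/q_j}$ produced by the naive strong-type estimate; the nearly-constant large-scale factors $\mathsf B_j$ on each $L$ (Lemma \ref{trunc:lemma}) are what let the same mechanism absorb the mixed terms. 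I expect this to be the hard step, for a structural reason: the vector-valued maximal function is genuinely \emph{not} dominated pointwise by any scalar average of $g^j$ (Fefferman--Stein is an $L^q$, not a pointwise, statement), so the control of $\mathsf B_j$ cannot be read off cube by cube and must be carried out at the level of $L^{q_j}$ norms, the passage from $p_j$ to $p_j+\eps$ being precisely what renders the stopping level sets summable and hence the source of the $\eps$-loss. Part 2 is the same scheme with $\eps=0$, the $(p_j+\eps)$-averages replaced by $p_j$-averages and the weak-type endpoint \eqref{FS4} for the joint operator $\M_{\vec p,r}$ used in place of the strong-type bound in the packing step, which is why it is both simpler and free of the $\eps$-loss.
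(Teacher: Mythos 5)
There is a genuine gap, and it sits exactly where the content of the lemma lies. Your stopping rule is the scalar Calder\'on--Zygmund one: you stop where the averages $\langle g^j\rangle_{p_j+\eps,3L}$, $g^j=\|\cic{f}^j\|_{\ell^{r_j}}$, are large. This gives the packing estimate, but it gives no $N$-uniform control of the outer maximal functions $\mathsf{B}_j$, and your principal estimate is never actually proved. Pointwise, the stopping condition only yields $\langle f^j_k\rangle_{p_j,P}\leq \langle g^j\rangle_{p_j,P}\lesssim \alpha_j$ \emph{coordinatewise} on non-stopping cubes, hence $\mathsf{B}_j\lesssim N^{1/r_j}\alpha_j$, which is useless; this is the obstruction you yourself name. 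At the norm level, the escape route you propose also fails: the Fefferman--Stein bound \eqref{FS2} at an exponent $q_j$ requires $\|g^j\|_{L^{q_j}}$ as input, and the $(p_j+\eps)$-average stopping condition does not control this for the large $q_j$ you need for H\"older (one needs $\sum_j 1/q_j\leq 1$, while the exponents you do control, $q_j=p_j+\eps$, generally satisfy $\sum_j 1/(p_j+\eps)>1$). So the sentence ``the stopping construction should force the relevant $L^{q_j}(Q)$ norms to be of size $\alpha_j|Q|^{1/q_j}$'' is precisely the unproved (and, with this stopping rule, unprovable as stated) step. A second, related error: your claim that Lemma \ref{trunc:lemma} makes $\mathsf{B}_j$ ``essentially constant on each $L$'' is false, because $\mathsf{B}_j$ is built from cubes not \emph{contained} in stopping cubes, which includes arbitrarily small (non-dyadic) cubes straddling $\partial L$; the lemma only applies to truncations at scales $\gtrsim \mathrm{dist}(x_0,x)$, i.e.\ scales comparable to $\ell(L)$ or larger.

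The paper circumvents both problems by a structurally different construction. The exceptional set is a level set of the operator itself composed with the Hardy--Littlewood maximal function, $E_Q=\bigcup_{j}\{x\in Q:\,\M\circ\mathsf{A}_j^Q(\cic{f}^j)(x)\geq C\}$, so the pointwise bound \eqref{proof:eq:11} off $E_Q$ holds by construction, and the Fefferman--Stein strong-type bound \eqref{FS2} at exponent $p_j+\eps$ is used only to make $|E_Q|$ small (this is the sole point where $\eps$ enters); the stopping cubes are the maximal dyadic $L$ with $9L\subset E_Q$. Moreover, on each $L$ the paper splits by \emph{scale} at $\ell(L)$, not by containment: the small-scale part reproduces $\Lambda_L$, while the large-scale part $\mathsf{A}_j^{\ell(L),\ell(Q)}$ is exactly the object to which Lemma \ref{trunc:lemma} applies (a good point outside $E_Q$ lies in a moderate dilate of $L$ by maximality), yielding the $L^\infty$ bound \eqref{proof:eq:13}; the extra $\M$ in the definition of $E_Q$ is what supplies the $L^1$-average control \eqref{proof:eq:12} for the remaining factors. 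The same discrepancy propagates to your sketch of Part 2: there too the paper takes $E_Q$ to be a level set of the truncated operator $\mathsf{A}^Q(\vec{\cic{f}})$ (with smallness from the weak-type bound \eqref{FS4}), not a set defined through scalar averages. To repair your argument you would have to replace your stopping rule by one formulated in terms of $\M\circ\mathsf{A}_j^Q$ and redo the splitting by scale, at which point you have reproduced the paper's proof.
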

\subsection{Proof of Lemma \ref{iter:lemma}} We can assume everything is supported in $3Q$. By horizontal dilation invariance we may assume $|Q|=1$. By vertical scaling we may assume $  \langle \| \cic{f}^{j}\|_{\ell^{r_j}}
 \rangle_{p_j+\eps,3Q}=1$ for all $j=1,\ldots, m$. 
Define the  collection $L\in\mathcal Q$ as the maximal dyadic cubes of $\R^d$ such that $9L\subset E_Q$ where
\[
E_Q =\bigcup_{j=1}^m\left\{ x\in Q: \M\circ{\mathsf A}_j^{Q}({\cic{f}^j}) (x) \geq C\right\}, 
\]
here $\M$ is the usual Hardy-Littlewood maximal function. If $C$ is large enough, using  the Lebesgue space boundedness of $\M \circ{\mathsf A}_j^{Q}$ with the choices $q_j=p_j+\eps$ in \eqref{FS2}, the set $E_Q$ has small measure compared to $Q$ and same for the pairwise disjoint cubes $L$ in the stopping collection $\mathcal Q$.
%\todo[inline,color=red!40]{(FDP 07/21) The extra $\eps$ comes from the choice of having one extra HL maximal function in the definition of $E_Q$. This is convenient because it gives the $L^1$ estimate \eqref{proof:eq:12} on the stopping cubes but can probably be dispensed with by iterating out also different terms, see the last red rectangle below. }

As a consequence of the construction of $\mathcal Q$ and of Lemma \ref{trunc:lemma} we obtain the following properties for all $j=1,\ldots,m$ and $L \in \mathcal Q$ 
\begin{align}
\label{proof:eq:11} &  \sup_{x \not \in E_Q}{\mathsf A}_j^{Q}({\cic{f}^j}) (x)\lesssim 1,\\
\label{proof:eq:12} &\sup_{L'\gtrsim L} \l {\mathsf A}_j^{Q}({\cic{f}^j}) \r_{1,L'}\lesssim 1,\\
\label{proof:eq:13} &\sup_{x\in L}  \mathsf{A}_j^{\ell(L),\ell(Q)}( {\cic{f}^j}) (x) \lesssim 1.
\end{align}
The third property follows from the fact that if $x\in L$ there is a point $x_0\in L'$, with $L'$ a moderate dilate of $L$, with small $\M_j$, so that one may apply Lemma \ref{trunc:lemma}.

We now prove the main estimate.  By virtue of \eqref{proof:eq:11}, 
\begin{equation}
\label{fs9}
\int_{Q\setminus E_Q}   \prod_{j=1}^m\mathsf{A}_j^{Q} ({\cic{f}^j})   (x) \,\d x \lesssim   1.
\end{equation}
Given that
 ${L \in \mathcal Q}$ cover $E_Q$ and  are pairwise disjoint  
it then suffices to prove that for each $L$
\begin{equation}
\label{proof:eq:2}
  \int_{L}  \prod_{j=1}^m \mathsf{A}_j^{Q} ({\cic{f}^j})   (x) \,\d x   \leq \Lambda_L(\cic{f}^1,\ldots,\cic{f}^{m})+  C|L|
\end{equation}
and sum this estimate up.
Observe that the left hand side of \eqref{proof:eq:2} is bounded by the sum
\begin{equation}
\label{2}
\int_{L}\prod_{j=1}^m\mathsf{A}_j^{\delta,\ell(L)} {\cic{f}^j}(x)\,\d x+ \sum_{\tau_1,\ldots,\tau_m}\int_{L}\prod_{j=1}^m\mathsf{A}_j^{\tau_j}\cic{f}^j (x)\,\d x,
%\l\cic 1_L \mathsf{A}^{\delta,\ell(L)} \vec{\cic{f}}, \mathsf{B}^{\ell(L),\ell(Q)}\cic{f}^{n+1} \r+
%\l\cic 1_L \mathsf{A}^{\ell(L),\ell(Q)}  \vec{\cic{f}}, \mathsf{B}^{\delta,\ell(L)}\cic{f}^{n+1}  \r\\ &+
%\l\cic 1_L \mathsf{A}^{\ell(L),\ell(Q)}  \vec{\cic{f}},\mathsf{B}^{\ell(L),\ell(Q)}\cic{f}^{n+1}  \r
\end{equation}
where $\mathsf{A}_j^{\tau_j}$ is either $\mathsf{A}_j^{\delta,\ell(L)}$ or $\mathsf{A}_j^{\ell(L),\ell(Q)}$, and the sum is over all the possible combinations of $\{\tau_1,\ldots,\tau_m\}$ except the one with $\mathsf{A}_j^{\delta,\ell(L)}$ appearing for all $j$. Note that the first term in the above display is equal to $\Lambda_L(\cic{f}^1,\ldots,\cic{f}^{m})$, so it suffices to show that
\begin{equation}
\label{fs10}
\sum_{\tau_1,\ldots,\tau_m}\int_{L}\prod_{j=1}^m\mathsf{A}_j^{\rho_j}\cic{f}^j (x)\,\d x\lesssim |L|
\end{equation}
where $\mathsf{A}_j^{\rho_j}$ is either $\mathsf{A}_j^{Q}$ or $\mathsf{A}_j^{\ell(L),\ell(Q)}$ and $\mathsf{A}_j^{\ell(L),\ell(Q)}$ appears at least at one $j$.
%\[
%\l\cic 1_L \mathsf{A}^{Q} \vec{\cic{f}}, \mathsf{B}^{\ell(L),\ell(Q)}\cic{f}^{n+1} \r+
%\l\cic 1_L \mathsf{A}^{\ell(L),\ell(Q)}  \vec{\cic{f}}, \mathsf{B}^{Q}\cic{f}^{n+1}  \r+
%\l\cic 1_L \mathsf{A}^{\ell(L),\ell(Q)}  \vec{\cic{f}},\mathsf{B}^{\ell(L),\ell(Q)}\cic{f}^{n+1}  \r
%\lesssim |L|
%\]
This is because the left hand side is larger than the second term of \eqref{2}. But this is immediate by using the $L^1$ estimate of \eqref{proof:eq:12} on the terms of the type $\mathsf{A}_j^{Q} {\cic{f}^j}$ and the
  $L^\infty$ estimate of \eqref{proof:eq:13} on the terms $\mathsf{A}_j^{\ell(L),\ell(Q)} {\cic{f}^j}$ respectively. The proof is complete.
  
\subsection{Proof of (\ref{main2:eq})}
The proof of (\ref{main2:eq}) proceeds very similarly to the one given above. Write $\vec{\cic f}=(\cic{f}^1,\ldots,\cic{f}^m)$ for simplicity and define the multilinear version of the truncated operator
\[
\mathsf{A}^{s,t} {\vec{\cic f}}  := 
\left\|\sup_{s<\ell(Q)\leq t} \prod_{j=1}^m \l f^j_k \r_{p_j,Q} \cic{1}_{Q}  \right\|_{\ell^ {r}(\mathbb C^N)}, \quad s,t>0.
\]With this definition of  $\mathsf{A}^{s,t}$, the analogues of (\ref{formal:eq}) and Lemma \ref{trunc:lemma} still hold. Therefore, a similar liming argument as above reduces the matter to showing 
\[
\Lambda_{Q}(\vec{\cic f}):=
\int_{Q} \mathsf{A}^{Q} \vec{\cic f}(x)\,\d x\lesssim \sum_{L\in\mathcal{Q}}|L|\prod_{j=1}^m\left\langle \|\cic{f}^j\|_{r_j}\right\rangle_{p_j,L}
\]uniformly over $\delta>0$ for some stopping collection $\mathcal{Q}$, where $\mathsf{A}^Q$ is the localized version of $\mathsf{A}^{s,t}$ defined as in \eqref{defaq}.
The proof of the last display proceeds by iteration of the analogous result to  Lemma \ref{iter:lemma}:
for any dyadic cube $Q$  and $\vec{\cic{f}} \in L^\infty_0(\R^d;\mathbb C^N)^{m}$   there exists a collection $L\in \mathcal Q$  of pairwise disjoint dyadic subcubes of $Q$ such that
\[
\sum_{L\in \mathcal Q} |L| \leq 2^{-16}|Q|
\]
and
\[
\Lambda_{Q}(\vec{\cic f}) \leq \Theta |Q| \prod_{j=1}^{m} \left\langle \| \cic{f}^{j}\|_{\ell^{r_j}}
\right\rangle _{3Q,p_j}+ \sum_{\substack{L \in \mathcal Q} } \Lambda_{L}(\vec{\cic f}).
\]
 To prove the last claim, the following changes are needed in the proof of Lemma \ref{iter:lemma}. We use instead the normalization $\left\langle\|\cic{f}^j\|_{\ell^{r_j}}\right\rangle_{p_j,3Q}=1$ without the $\varepsilon$, and define the exceptional set without the extra Hardy-Littlewood maximal function, i.e.
\[
E_Q:=\{x\in Q:\, \mathsf{A}^Q(\vec{\cic{f}})(x)\geq C\}.
\]Since, from \eqref{FS4}, $\mathsf{A}^Q$  has the weak-type bound at $\prod_{j=1}^m L^{p_j}$, the measure of $E_Q$ is small for sufficiently large $C$. Note that one still has analogues of estimates (\ref{proof:eq:11}) and (\ref{proof:eq:13}) for $\mathsf{A}^{Q} \vec{\cic f}$ in place of ${\mathsf A}_j^{Q}({\cic{f}^j})$, and (\ref{proof:eq:12}) becomes irrelevant in this case. The proof is completed by using these estimates as in \eqref{fs9} and \eqref{fs10} respectively.
  }}

\section{Vector-valued weighted norm inequalities}\label{S3}
 
Using the almost equivalence between scalar and vector-valued sparse estimates of Theorem \ref{sparse:th}, we prove vector-valued weighted norm inequalities for  $n$-sublinear operators with controlled sparse $\vec p=(p_1,\ldots,p_{n+1})$ norm. The weighted bounds can be obtained via estimates for the form
\[
(g^1,\ldots, g^{n+1}) \mapsto  \mathrm{P}_{\vec p}(g^1,\ldots, g^{n+1}; F) :=\int_{F} \M_{(p_1,\ldots p_n)} (g^1,\ldots,g^{n}) (x)
\M_{p_{n+1}} g^{n+1} (x) \, \d x.
\]
where
\begin{equation}
\label{FSscal}
\M_{\vec t} (g^1,\ldots, g^n)   := \left|\sup_{Q} \prod_{j=1}^n \l g^j \r_{t_j,Q} \cic{1}_{Q}\right|
\end{equation}
is the scalar valued version of \eqref{FS}.
We consider H\"older tuples
\begin{equation}
\label{holder2}
1\leq q_1,\ldots, q_n\leq \infty, \qquad q : =\frac{1}{ \sum_{j=1}^n \frac{1}{q_j}} \leq 1
\end{equation}
and 
 weight vectors $\vec v=(v_1,\ldots,v_n)$ in $\R^d$  with 
\begin{equation}
\label{holder3}
v= \prod_{j=1}^n v_j^{\frac{q}{q_j}}.
\end{equation}
It is well known \cite[Theorem 3.3]{LOP+} that
\begin{equation}
\label{weighted:eq:1}
\M_{(p_1,\ldots p_n)}: \prod_{j=1}^n L^{q_j}(v_j  ) \to  L^{q}(v  )  \iff q_1> p_1,\ldots, q_n> p_n, \quad [\vec v]_{A_{(q_1,\ldots,q_n)}^{(p_1,\ldots,p_n,1)}}< \infty\end{equation}
where the vector  weight characteristic appearing above is defined more generally by
\begin{equation}
\label{mwc}
[\vec v]_{A_{(q_1,\ldots,q_n)}^{(t_1,\ldots,t_{n+1})}}
:= \sup_{Q}\left( \l v \r_{\frac{t_{n+1}}{q-(q-1)t_{n+1}},Q}^{\frac{1}{q}} \prod_{j=1}^n \l (v_{j})^{-1}\r_{\frac{t_j}{q_j-t_j},Q}^{\frac{1}{q_j}}  \right)
<\infty.
\end{equation}
When $n=1$, the above characteristics generalize the familiar $A_t$ (Muckenhoupt) and $RH_t$ (Reverse H\"older) classes, namely
\[
A_{q}^{(t_1,t_2)} = A_{\frac{q}{t_1}} \cap RH_{\frac{t_2}{q-(q-1)t_2}}.
\]  
%Before stating our weighted inequalities, we require one last piece of notation. For $\vec p \in [1,\infty)^{n+1}$,
%write $B_\eps(\vec p)= \{\pi\in [1,\infty)^n:  |\pi_j-p_j|<\eps, j=1,\ldots, n+1\}$ and define
%\begin{equation}
%\label{aux:eq0}
%\Theta_{\vec p}( T):=
%\inf_{\eps>0}
%\sup_{ \vec \pi \in  B_\eps(\vec p) }  \| {T} \|_{\vec \pi};
%\end{equation}
%in words, $ \Theta_{\vec p }$ is finite if the operator $T$ has a sparse bound for all tuples in some open neighborhood of $\vec p$. We are now ready to state the theorem:
\begin{theorem} \label{weighted:thm}
Let  $(q_1,\ldots,q_n), q $ be as in \eqref{holder2} and let $\vec v=(v_1,\ldots,v_n), v$ be as in \eqref{holder3}. Assume  that 
\begin{itemize}
%\item[1.]  $ \displaystyle \sup_{j=1,\ldots,N}\Theta_{\vec p}(T_j) \leq 1$  for some $\vec p=(p_1,\ldots,p_{n+1})$ with $1\leq p_1\leq q_1,\ldots,1\leq p_n\leq q_n$;
\item[1.]   $  \displaystyle\sup_{j=1,\ldots,N}\displaystyle \|T_j\|_{\vec p} \leq 1$  for some $\vec p=(p_1,\ldots,p_{n+1})$ with $1\leq p_1\leq q_1,\ldots,1\leq p_n\leq q_n$;
\item[2.] condition \eqref{weighted:eq:1} holds, namely   $$\vec v \in A_{(q_1,\ldots, q_n)}^{(p_1,\ldots,p_n,1)};$$
 \item[3.] there exists $t\in [1, p_{n+1}']$ such that
\[
v\in A_t \cap RH_{\frac{p_{n+1}}{t(1-p_{n+1})+p_{n+1}}}.
\] 
\end{itemize}
  Then the vector-valued strong type bound
  \begin{equation}
\label{stb}
\cic{T}: \prod_{j=1}^n L^{q_j}(v_j ; \ell^{r_j} ) \to  L^{q}(v; \ell^{r})  
\end{equation}
  holds true whenever $r_1\geq p_1, \ldots, r_n\geq p_n, r_{n+1}=r' \geq p_{n+1}$.
 \end{theorem}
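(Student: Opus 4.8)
The plan is to reduce the vector-valued strong-type bound to a scalar weighted estimate for the positive form $\mathrm{P}_{\vec p}$, and then to control that form by peeling off its last entry. First I would feed the scalar hypothesis~1 into Theorem~\ref{sparse:th}: since assumption~2 forces $q_j>p_j$ (the characterization \eqref{weighted:eq:1} is an equivalence) and $r_j\ge p_j$, the tuple $\vec r$ meets the requirement of Theorem~\ref{sparse:th}, the degenerate cases $r_j=p_j$ being treated separately by the elementary identity $\langle\|\cic f^j\|_{\ell^{p_j}}\rangle_{p_j,Q}^{p_j}=\sum_k\langle f^j_k\rangle_{p_j,Q}^{p_j}$. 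Hence $\|\cic T\|_{(\vec p,\vec r)}\lesssim1$, so for each $\vec{\cic f}$ and each test tuple $\cic f^{n+1}$ there is a sparse $\mathcal Q$ with $|\langle\cic T(\cic f^1,\ldots,\cic f^n),\cic f^{n+1}\rangle|\lesssim\sum_{Q\in\mathcal Q}|Q|\prod_{j=1}^{n+1}\langle\|\cic f^j\|_{\ell^{r_j}}\rangle_{p_j,Q}$. Writing $g^j:=\|\cic f^j\|_{\ell^{r_j}}$ and invoking the scalar equivalence \eqref{equiv:eq} together with the pointwise Hölder bound $\M_{\vec p}\le\M_{(p_1,\ldots,p_n)}\,\M_{p_{n+1}}$ (the scalar analogue of the refinement of \eqref{FS3}), the right-hand side is $\lesssim\mathrm{P}_{\vec p}(g^1,\ldots,g^{n+1};\R^d)$. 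Because $r_{n+1}=r'$, choosing $\cic f^{n+1}$ to realize the pointwise $\ell^r$--$\ell^{r'}$ duality against $\cic T(\vec{\cic f})$ turns $|\langle\cic T(\vec{\cic f}),\cic f^{n+1}\rangle|$ into $\int\|\cic T(\vec{\cic f})\|_{\ell^r}\,g^{n+1}$, so \eqref{stb} is reduced to a single weighted bound for $\mathrm{P}_{\vec p}$ in the scalars $g^1,\ldots,g^{n+1}$.

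Next I would establish that weighted bound by separating the two maximal functions in $\mathrm{P}_{\vec p}(g^1,\ldots,g^{n+1};\R^d)=\int\M_{(p_1,\ldots,p_n)}(g^1,\ldots,g^n)\,\M_{p_{n+1}}g^{n+1}$. For the inner factor, assumption~2 is exactly condition \eqref{weighted:eq:1}, which yields $\|\M_{(p_1,\ldots,p_n)}(g^1,\ldots,g^n)\|_{L^q(v)}\lesssim\prod_{j=1}^n\|g^j\|_{L^{q_j}(v_j)}=\prod_{j=1}^n\|\cic f^j\|_{L^{q_j}(v_j;\ell^{r_j})}$; the decisive point is that this is an honest $L^q(v)$ estimate with $q\le1$, so the multilinear range is already absorbed at this step. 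For the outer factor I would use $\M_{p_{n+1}}=(\M(|\cdot|^{p_{n+1}}))^{1/p_{n+1}}$, for which the bound $\|\M_{p_{n+1}}g^{n+1}\|_{L^{q'}(v^{1-q'})}\lesssim\|g^{n+1}\|_{L^{q'}(v^{1-q'})}$ holds precisely when the dual weight $\sigma:=v^{1-q'}$ lies in $A_{q'/p_{n+1}}$. Unwinding this Muckenhoupt condition for $\sigma$ into a condition on $v$ through the dictionary $A_s^{(t_1,t_2)}=A_{s/t_1}\cap RH_{t_2/(s-(s-1)t_2)}$ recorded before the statement, the residual degree of freedom is exactly the Muckenhoupt exponent, realized by the parameter $t\in[1,p_{n+1}']$; the resulting requirement $v\in A_t\cap RH_{p_{n+1}/(t(1-p_{n+1})+p_{n+1})}$ is hypothesis~3.

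The genuine obstacle is recombining these two factors in the range $q\le1$, where $L^q(v)$ is merely quasi-normed and the associate-space duality underlying the split $\int AB\le\|A\|_{L^q(v)}\|B\|_{L^{q'}(v^{1-q'})}$ is unavailable (indeed $q'<0$). I would handle this as in the scalar sparse-to-weighted passage of \cite{CuDPOu}: the inner $\ell^r$--$\ell^{r'}$ pairing is legitimate throughout, being genuine since $r_{n+1}=r'\ge1$, while the outer $L^q(v)$ step is carried by the reverse-Hölder duality valid for $0<q\le1$, realized through a Rubio de Francia construction of an admissible density $\phi$, normalized in the negative-exponent space $L^{q'}(v^{1-q'})$, with $\M_{p_{n+1}}(\phi v)\lesssim\phi v$; testing the form bound against such $\phi$ then returns $\|\cic T(\vec{\cic f})\|_{L^q(v;\ell^r)}$. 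This construction is available precisely because hypothesis~3 gives $v\in A_t\subset A_\infty$, which supplies the reverse-Hölder inequality for $v$ that the iteration requires. I expect the $q\le1$ duality to be the real crux, with the index-matching of hypothesis~3 to the $A_{q'/p_{n+1}}$ condition the most delicate piece of otherwise routine bookkeeping; assembling the two factors then yields \eqref{stb} with implicit constant depending only on $\vec p$, $\vec r$, $(q_1,\ldots,q_n)$, the weight characteristics in hypotheses~2 and~3, and the dimension $d$.
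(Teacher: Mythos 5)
Your opening moves coincide with the paper's: hypothesis 1 plus Theorem \ref{sparse:th} yields the vector-valued sparse bound, and passing to the scalars $g^j=\|\cic{f}^j\|_{\ell^{r_j}}$ and the form $\mathrm{P}_{\vec p}$ is exactly the right reduction. The gap is everything after that, and it is not a corner case: by \eqref{holder2} the theorem lives entirely in the range $q\le 1$ (the case $q>1$ is deferred to Remark \ref{qm1}), which is precisely the range where your argument breaks. You reduce to a \emph{global} bound for $\mathrm{P}_{\vec p}(g^1,\ldots,g^{n+1};\R^d)$ with $g^{n+1}$ an arbitrary dual function, attempt the H\"older split at exponents $(q,q')$, and, correctly noting that $q'\leq 0$ makes this illegitimate, propose to repair it with a Rubio de Francia density $\phi$ satisfying $\M_{p_{n+1}}(\phi v)\lesssim \phi v$. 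That repair is circular: after inserting $\phi$ and using the sparse bound you are left with $\int \M_{(p_1,\ldots,p_n)}(g^1,\ldots,g^n)\,\phi v\,\d x$, and to conclude you would need this pairing to be $\lesssim \prod_j\|g^j\|_{L^{q_j}(v_j)}$ for an admissibly normalized $\phi$ --- which is the very same $q\le1$ duality statement you set out to avoid, since for $0<q<1$ the reverse H\"older inequality gives the \emph{lower} bound $\int \M\,\phi v \ge \|\M\|_{L^q(v)}$ for admissible $\phi$, never an upper bound. Relatedly, your ``dictionary'' step matching hypothesis 3 to $v^{1-q'}\in A_{q'/p_{n+1}}$ is vacuous, because for $q\le 1$ there is no Muckenhoupt class with that exponent; the exponent actually appearing in hypothesis 3 is $t'$ with $t\in[1,p_{n+1}']$, and nothing in your argument ever makes the two maximal functions meet at $(t,t')$. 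Nor can one simply replace $q$ by $t$ globally: an estimate $\|\M_{(p_1,\ldots,p_n)}(\vec g)\|_{L^t(v)}\lesssim\prod_j\|g^j\|_{L^{q_j}(v_j)}$ with $t\neq q$ fails already for Lebesgue measure, by dilation scaling.

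The missing idea in the paper's proof is to establish the \emph{weak-type} analogue of \eqref{stb} and interpolate, via the restricted formulation \eqref{wti}: one only needs to test against $\cic{f}^{n+1} v\cic{1}_G$ for a suitable major subset $G$ of a given set $F$ of finite measure, and this freedom is what unlocks $q\le1$. One removes the exceptional set $E=\{\M_{(p_1,\ldots,p_n)}(g^1,\ldots,g^n)>\beta^{1/q}v(F)^{-1/q}\}$ together with a dyadic enlargement $E'$ (whose $v$-measure is controlled using \eqref{weighted:eq:1} and $v\in A_\infty$, i.e.\ hypotheses 2 and 3), so that the surviving sparse cubes are those meeting $\R^d\setminus E$ substantially; this produces the \emph{localized} form $\mathrm{P}_{\vec p}(g^1,\ldots,g^n,g^{n+1}v\cic{1}_F;\R^d\setminus E)$ rather than your global one. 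The localization is then used twice: off $E$ one has the pointwise bound $\M_{(p_1,\ldots,p_n)}(\vec g)\lesssim v(F)^{-1/q}$, and interpolating this $L^\infty$ information with the $L^q(v)$ bound of \eqref{weighted:eq:1} gives $\|\M_{(p_1,\ldots,p_n)}(\vec g)\cic{1}_{\R^d\setminus E}\|_{L^t(v)}\lesssim v(F)^{1/t-1/q}$, an estimate that only makes sense because of the restriction to $\R^d\setminus E$; meanwhile hypothesis 3, equivalent to $v^{1-t'}\in A_{t'/p_{n+1}}$, gives $\|\M_{p_{n+1}}(v\cic{1}_F)\|_{L^{t'}(v^{1-t'})}\lesssim v(F)^{1/t'}$. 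H\"older at the legitimate pair $(t,t')$ then yields $v(F)^{1-1/q}$, exactly what \eqref{wti} demands. In short, the weak-type/major-subset formulation and the exceptional-set localization are the two ingredients your proposal lacks, and without them the $(q,q')$ obstruction you identified cannot be circumvented.
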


\begin{proof} As 
%$\Theta_{\vec p}(T_j)\leq 1$ 
$ \|T_j\|_{\vec p} \leq 1$ for all $j$, Theorem \ref{sparse:th} implies that there exists a sparse collection $\mathcal Q$ such that \begin{equation}
\label{AUX:eq1}
|\l \cic{T}(\cic{f}^1,\ldots,\cic{f}^{n}), \cic{f}^{n+1}\r|\lesssim   
\sum_{Q\in \mathcal Q } |Q|\prod_{j=1}^{n+1}\left\langle\|\cic{f}^j\|_{\ell_{r_j}} \right\rangle_{p_j,Q} 
\end{equation}
under the the assumptions $ r_j>p_j, j=1,\ldots, n+1.$
By interpolation, it suffices to prove the weak-type analogue of \eqref{stb}.
 We use the well known principle \begin{equation}
\label{wti}
\left\|\cic{T}: \prod_{j=1}^n L^{q_j}(v_j ; \ell^{r_j} ) \to  L^{q,\infty}(v; \ell^{r})\right\|\lesssim \sup
 \inf_{\substack{G\subset F\\ v(F)
\leq 2v(G) } } \frac{|\l \cic{T}(\cic{f}^1,\ldots,\cic{f}^{n}), \cic{f}^{n+1}v\cic{1}_G\r|}{v(F)^{1-\frac1q}}, 
\end{equation}
where the supremum is taken over sets $F\subset \R^d$ of finite measure, $\cic{f}^j \in L^{q_j}(v_j ; \ell^{r_j} ), j=1,\ldots, n$  of unit norm, and functions $\cic{f}^{n+1}$ with
$\|\cic{f}^{n+1}\|_{L^\infty(\R^d;\ell^{r_{n+1}})}\leq 1$. Fix $F,\cic{f}^{j} $ as such and
introduce the scalar-valued functions $g^j:=\|\cic{f}^{j}\|_{\ell_{r_{j}}}$, $j=1,\ldots,n+1$.
Set,
\[
E=\left\{x\in \R^d: \mathrm{M}_{(p_1,\ldots,p_n)}(g^1,\ldots,g^n) > \beta^{\frac1q} v(F)^{-\frac{1}{q}} \right\},
\]
 where $\beta>0$ will be determined at the end. 
We let $\widetilde G=\R^d\setminus E$ and finally we define the smaller set $G=F\setminus E'$ where $E'$ is the union of the  maximal dyadic cubes $Q$ such that $|Q|\leq 2^{5}|Q\cap E|$. Notice that 
\[
|E'| \leq 2^{5} |E| \implies v(E') \leq C([v]_{A_\infty}) v(E) < \frac{C}{\beta} v(F) \leq \frac{1}{2} v(F)
\]  
by choosing $\beta$ large enough and relying upon the bound \eqref{weighted:eq:1} to estimate $v(E)$. Therefore $G$ is a major subset of $F$. In this estimate we have used that $v\in A_\infty$, which is guaranteed by the third assumption of the theorem. 

Now, the argument used in \cite[Appendix A]{CuDPOu} applied to \eqref{AUX:eq1} with $\cic{f}^{n+1}$ replaced by $\cic{f}^{n+1}v\cic{1}_G$ returns
\begin{equation}
\begin{split}
\label{aux:eq2} |\l \cic{T}(\cic{f}^1,\ldots,\cic{f}^{n}), \cic{f}^{n+1}v\cic{1}_G\r| & \lesssim \sum_{\substack{Q\in \mathcal Q\\ |Q\cap \widetilde G| \geq 2^{-5}|Q|} } |Q|\left(\prod_{j=1}^{n} \langle g ^j \rangle_{p_j,Q}\right) \langle g^{n+1}v\cic{1}_F \rangle_{p_{n+1},Q} \\ & \lesssim 
\mathrm{P}_{\vec p }(g^1,\ldots, g^n, g^{n+1} v\cic{1}_{F};\R^d\backslash E) . 
\end{split}
\end{equation}
Further,
if $ t $ is as in the third assumption, an interpolation argument between \eqref{weighted:eq:1} and the $L^\infty$ estimate off the set $E$ yields  
\[
\big\|\mathrm{M}_{(p_1,\ldots,p_n)}(g^1,\ldots,g^n)\cic{1}_{\R^d\backslash E}\big\|_{L^{t}(v)} \lesssim v(F)^{\frac1t-\frac{1}{q}}.
\]
Therefore 
\[
\begin{split}
 &\quad  |\l \cic{T}(\cic{f}^1,\ldots,\cic{f}^{n}), \cic{f}^{n+1}v\cic{1}_G\r|\lesssim \mathrm{P}_{\vec p}(g^1,\ldots, g^n, g^{n+1} v\cic{1}_{F};\R^d\backslash E)  \\ &=
\int_{\widetilde G}\left( \mathrm{M}_{(p_1,\ldots,p_n)}(g^1,\ldots,g^n) v^{\frac1t}\right)\left(\mathrm{M}_{p_{n+1}}(g^{n+1} v\cic{1}_F) v^{-\frac1t}\right) \, \d x
\\  &\leq \big\|\mathrm{M}_{(p_1,\ldots,p_n)}(g^1,\ldots,g^n)\cic{1}_{\R^d\backslash E}\big\|_{L^{t}(v)}\|\mathrm{M}_{p_{n+1}}( v\cic{1}_F)\|_{L^{t'}(v^{1-t'})} 
\lesssim v(F)^{\frac1t-\frac{1}{q}} v(F)^{\frac{1}{t'}}=  v(F)^{1-\frac{1}{q}}
\end{split}
 \]
which, combined with \eqref{wti}, gives the desired result. Note that the third assumption, which is equivalent \cite{IMS} to   \[
v^{1-t'} \in A_{\frac{t'}{p_{n+1}} }
\] 
was used to  ensure the boundedness of $\M_{p_{n+1}}$ on $L^{t'}(v^{1-t'})$.  The proof is thus completed. 
\end{proof}
\begin{remark}\label{qm1}  Theorem \ref{weighted:thm} does not cover the range $q>1$.  In that range, in fact, \eqref{stb} continues to hold with   conditions 2.\ and 3.\ of Theorem \ref{weighted:thm}     replaced by a single   condition of multilinear type. To wit, if  $\|\{T_1,\ldots,T_N\}\|_{\vec p}<\infty$ with
\[  1\leq p_1\leq \min\{ q_1,r_1\},\ldots,1\leq p_n\leq \min\{ q_n,r_n\},\qquad  1\leq p_{n+1}\leq \min\left\{\frac{q}{q-1}, r_{n+1}\right\}\]
and $\vec v \in A^{(p_1,\ldots,p_{n+1})}_{(q_1,\ldots,q_n)}$, then the bound \eqref{stb} holds true. The proof uses the sparse bound \eqref{AUX:eq1} in exactly the same fashion as \cite[Theorem 3]{CuDPOu}. When $q\leq 1$, we are not aware of a fully multilinear sufficient condition on the weights leading to    estimate \eqref{stb}; Theorem \ref{weighted:thm} is a partial substitute in this context.
\end{remark}
\begin{remark} As the multilinear weighted classes \eqref{mwc} are not amenable to (restricted range) extrapolation, Theorem \ref{weighted:thm}, as well as its corollaries described in the next section, cannot be obtained within the multilinear extrapolation theory developed in the recent article \cite{CUM}.
\end{remark}
 
 \subsection{An example: the bilinear Hilbert transform}   We show how, in view of the scalar sparse domination results of \cite{CuDPOu}, Theorem \ref{weighted:thm} applies to a class of operators which includes the bilinear Hilbert transform. Let $T_{{m}}$ be bilinear operators whose action on Schwarz functions is given by
 \begin{equation}
\label{IN1}
\l T_m(g^1,g^2),g^3\r= \int \displaylimits_{\xi_1+\xi_2+\xi_3=0}  m(\xi) 
\prod_{j=1}^3 \widehat {g^j}(\xi_j) \, \d \xi .
\end{equation}
Here $m$ belongs to the class $\mathcal M$  of  bilinear  Fourier multipliers with singularity along the one dimensional subspace $\{\xi \in \R^3: \xi_1=\xi_2\}$; that  is
\begin{equation}
\label{decay}
\sup_{m\in \mathcal M}
\sup_{|\alpha| \leq N}\sup_{ \xi_1+\xi_2+\xi_3=0}\big| \xi_1-\xi_2 \big|^\alpha \big| \partial_\alpha m ({\xi})\big| \lesssim_N 1.
\end{equation}
The bilinear Hilbert transform \cite{LT1,LT2} corresponds to the (formal) choice $m(\xi)=\mathrm{sign}(\xi_1-\xi_2)$. Sparse bounds for this type of operators were first established, and fully characterized in the open range, in \cite{CuDPOu}, where it was proved that\begin{equation}
\label{BHTsparse}
\sup_{m \in \mathcal M}   \|T_m \|_{\vec p} <\infty
\iff
 1<p_1,p_2,p_3<\infty, \quad \sum_{j=1}^3\frac{1}{\min\{p_j,2\}}  <2.
 \end{equation}
Therefore, Theorem \ref{weighted:thm} with $n=2$ may be applied for any $\vec p$ in the range \eqref{BHTsparse}.  It is easy to see that
there exists  such a $\vec p$ with $1\leq q_1\leq p_1,1\leq q_2\leq p_2$ for all $(q_1,q_2)$ belonging to the sharp open range of unweighted strong-type estimates for the multipliers $\{T_m:m\in \mathcal M\}$, namely
\begin{equation}
\label{BHTscal}
1<q_1,q_2 \leq \infty, \qquad \frac 23 <q<\infty.
\end{equation}
 Therefore,  Theorem \ref{weighted:thm}, together with  its version for $q>1$ described in Remark \ref{qm1}, yield weighted, vector-valued boundedness of the multipliers $\{T_m:m\in \mathcal M\}$  for weights $v_1,v_2$ satisfying conditions 2.\ and 3.\  and the exponents recover the full unweighted range.%of such theorem.
 
  Weighted bounds in such a full range, under more stringent assumption on the weights were obtained in \cite{CUM} by extrapolation of the results of \cite{CuDPOu}. The vector-valued analogue of the results in \cite{CUM} was instead proved  in \cite{BM2} by making use of vector-valued sparse bounds in a different way. To illustrate the subtle difference between the class of weights allowed in \cite{BM2,CUM} and those falling within the scope of Theorem \ref{weighted:thm}, we particularize our result to the diagonal case $q_1=q_2=2q$ with $\frac{2}{3}<q<\infty$. This is done  for simplicity of description of the multilinear classes $A_{(q_1,\ldots,q_n)}^{(t_1,\ldots, t_{n+1})}$ when $t_{n+1}=1,t_1=\cdots=t_n$, but off diagonal results can also be obtained in a similar fashion.

Note that the tuple (parametrized by $s$)
\[
p_1=p_2=\frac{2}{s}, \quad p_3=\frac{1}{2-s}+\delta, \qquad 1\leq s\leq \frac32 
\]
satisfies the conditions in \eqref{BHTsparse} for all $\delta>0$. As noted  in \cite[Lemma 3.2]{Chaffee2017}, if $qs\geq 1$, then
\begin{equation}
\label{RC1}
(v_1,v_2) \in A_{(2q,2q)}^{(\frac{2}{s},\frac{2}{s},1)} \iff   v_1,v_2 \in \mathrm{RC}\left( \frac{1}{1-qs}, \frac{1}{1+qs} \right) \supsetneq   A_{qs}, \,v=(v_1v_2)^{\frac12}\in A_{2qs}.
\end{equation}
Recall from \cite{IMS} that for $-\infty\leq \alpha< \beta\leq \infty$, the weight class $\mathrm{RC}(\alpha, \beta)$ contains those weights $w$ on $\R^d$ such that  
\[
\l w\r_{\beta, Q} \leq C  \l w\r_{\alpha, Q},
\]
with $C$ uniform over all cubes $Q$ of $\R^d$.  In particular, for $1\leq t<\infty$
\[
A_t = \mathrm{RC}\ \left( \frac{1}{1-t}, 1 \right), \qquad {RH}_t= \mathrm{RC}\left( 1, t \right).
\]
and the strict inclusion in \eqref{RC1} follows from the obvious relations   $
\alpha\leq \gamma \leq \delta\leq \beta\implies \mathrm{RC}(\alpha, \beta) \subset \mathrm{RC}(\gamma, \delta ) $. This  observation characterizes the weights that will verify the second assumption of Theorem \ref{weighted:thm}. Finally, rewriting the third assumption for our choice of tuple $\vec p$ yields the following result, which strictly contains   the diagonal case of the main results of  \cite{CUM}  (see also \cite{BM2} for the vector-valued analogue). 
\begin{theorem} \label{weighted:thm2}
Let $\frac23<q\leq 1$, $v_1,v_2$ be weights on $\R$. Assume that there exist
\[
s\in \textstyle \left[\frac1q, \frac{3}{2} \right]  , \qquad t \in \left[1, \frac{1}{s-1} \right)
\] such that
\[
v_1,v_2 \in  \mathrm{RC}\left( \frac{1}{1-qs}, \frac{1}{1+qs} \right) \supsetneq A_{qs}
\]
and
\[
v:=(v_1v_2)^{\frac12}\in A_{\min\{t,2qs\}}  \cap RH_{\frac{1}{1-t(s-1)}}.
\] 
Then the vector-valued strong type bound
\begin{equation}
\label{stbBHT}
\cic{T}=\{T_{m_j}:m_j \in \mathcal M\}: \prod_{j=1}^2 L^{2q}(v_j ; \ell^{r_j} ) \to  L^{q}(v; \ell^{r})  
\end{equation}
  holds true whenever $\min\{r_1,r_2\}\geq\frac{2}{s},  r_{3}= r'\geq\frac{1}{2-s}$.
 \end{theorem}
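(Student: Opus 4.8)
The plan is to derive Theorem \ref{weighted:thm2} as a direct specialization of Theorem \ref{weighted:thm}, using the scalar sparse bound \eqref{BHTsparse} for the class $\mathcal M$ as the input. First I would fix the parametrized tuple $\vec p=(p_1,p_2,p_3)=(\frac2s,\frac2s,\frac{1}{2-s}+\delta)$ and verify that it satisfies the characterization in \eqref{BHTsparse}, so that hypothesis 1.\ of Theorem \ref{weighted:thm} holds with $\sup_j\|T_{m_j}\|_{\vec p}<\infty$ uniformly over $m_j\in\mathcal M$; here one checks $1<p_1,p_2,p_3<\infty$ and the summability condition $\sum_j \frac{1}{\min\{p_j,2\}}<2$ on the allowed range of $s$. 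Next I would confirm the matching of the Lebesgue exponents: with $q_1=q_2=2q$ one has $q=\frac{1}{1/q_1+1/q_2}$ as required by \eqref{holder2}, and the constraints $1\le p_j\le q_j$ reduce to $\frac2s\le 2q$, i.e.\ $s\ge\frac1q$, which is exactly the left endpoint of the stated interval for $s$.

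The second step is to translate the two weight conditions of Theorem \ref{weighted:thm} into the $\mathrm{RC}$-class language. For hypothesis 2.\ I would invoke the equivalence \eqref{RC1}, quoted from \cite[Lemma 3.2]{Chaffee2017}, which for $qs\ge1$ identifies $\vec v\in A_{(2q,2q)}^{(2/s,2/s,1)}$ with the condition $v_1,v_2\in\mathrm{RC}(\frac{1}{1-qs},\frac{1}{1+qs})$; the hypothesis $s\ge\frac1q$ guarantees $qs\ge1$ so this applies. For hypothesis 3.\ I would substitute $p_{n+1}=p_3=\frac{1}{2-s}+\delta$ into the requirement $v\in A_t\cap RH_{\frac{p_3}{t(1-p_3)+p_3}}$ and let $\delta\to0$: a short computation shows the reverse-Hölder exponent $\frac{p_3}{t(1-p_3)+p_3}$ converges to $\frac{1}{1-t(s-1)}$, recovering the stated class $RH_{\frac{1}{1-t(s-1)}}$, while the admissibility range $t\in[1,p_3']$ becomes $t\in[1,\frac{1}{s-1})$ in the limit. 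The appearance of $A_{\min\{t,2qs\}}$ rather than $A_t$ reflects that $v\in A_{2qs}$ is already forced by the $\mathrm{RC}$ condition on $v_1,v_2$ via \eqref{RC1}, so the effective Muckenhoupt exponent is the smaller of the two.

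Finally I would check the vector-valued exponent constraints. Theorem \ref{weighted:thm} produces \eqref{stb} whenever $r_1\ge p_1$, $r_2\ge p_2$, $r_3=r'\ge p_3$; substituting the tuple gives $\min\{r_1,r_2\}\ge\frac2s$ and $r_3=r'\ge\frac{1}{2-s}$ after sending $\delta\to0$, matching the statement of Theorem \ref{weighted:thm2}. I would also note that $q\le1$ places us in the regime covered by Theorem \ref{weighted:thm} (as opposed to the $q>1$ regime of Remark \ref{qm1}), which is why the theorem is stated for $\frac23<q\le1$; the lower bound $q>\frac23$ is inherited from the open range \eqref{BHTscal} of unweighted estimates and ensures a valid $s$ exists. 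The main obstacle, such as it is, lies not in any single deep step but in carefully reconciling the limiting behavior as $\delta\to0$: one must confirm that the weight classes obtained are genuinely independent of the auxiliary parameter $\delta$ and that the strict inequality $t<\frac{1}{s-1}$ (rather than $\le$) is exactly what survives the limit of the closed condition $t\le p_3'$. Since Theorem \ref{weighted:thm} is already proved, no new harmonic analysis is required; the entire argument is a bookkeeping translation of hypotheses into the $\mathrm{RC}$ formalism of \cite{IMS}.
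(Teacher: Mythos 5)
Your proposal is correct and takes essentially the same approach as the paper, whose proof of Theorem \ref{weighted:thm2} is precisely this specialization of Theorem \ref{weighted:thm} to $q_1=q_2=2q$ and $p_1=p_2=\frac{2}{s}$, $p_3=\frac{1}{2-s}+\delta$, with hypothesis 1 supplied by \eqref{BHTsparse}, hypothesis 2 translated via \eqref{RC1}, and hypothesis 3 rewritten in the limit $\delta\to 0$. One correction to your closing remark: the classes $RH_{\frac{p_3}{t(1-p_3)+p_3}}$ demanded for fixed $\delta>0$ are \emph{not} independent of $\delta$ (they strictly shrink as $\delta$ increases), and what actually closes the limiting argument is the self-improvement (openness) property of reverse H\"older classes, which upgrades the hypothesis $v\in RH_{\frac{1}{1-t(s-1)}}$ to $v\in RH_{\frac{1}{1-t(s-1)}+\eps}$ for some $\eps>0$ and hence into the class required once $\delta$ is small enough --- a point the paper itself also leaves implicit.
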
 
For instance, the estimate, valid for all vector-valued tuples with $\min\{r_1,r_2,r_3\} \geq 2$, 
\[
\cic{T}: \prod_{j=1}^2 L^{2q}(v; \ell^{r_j} ) \to  L^{q}(v; \ell^{r}) , \qquad v_1,v_2 \in  A_{\frac{3q}{ 2}},  v \in A_{\frac{3q}{ 2}}  \cap RH_2,\quad \frac23<q\leq 1, \quad  
\]
follows by taking $s =\frac32,t=1$ in Theorem \ref{weighted:thm2}. This result includes    \cite[Corollary 4]{CuDPOu}, in vector-valued form.

\subsection*{Acknowledgment}
The authors would like to thank Kangwei Li for fruitful discussions on the weak-type weighted theory of multisublinear maximal functions.  
%\todo[inline,color=red!40]{(FDP 07/22) The advantage of using the multisublinear max function is that the higher linearities case are the same!}
\bibliography{FeffermanStein}
\bibliographystyle{amsplain}
\end{document}